%

\documentclass{amsart}

\usepackage[pdftex]{graphicx}



\newtheorem{theorem}{Theorem}[section]
\newtheorem{lemma}[theorem]{Lemma}
\newtheorem{corollary}[theorem]{Corollary}%

\newtheorem{conjecture}[theorem]{Conjecture}
\newtheorem{question}[theorem]{Question}

\theoremstyle{definition}

\newtheorem{example}[theorem]{Example}

\theoremstyle{remark}
\newtheorem{remark}[theorem]{Remark}

\numberwithin{equation}{section}

\begin{document}

\markboth{Tetsuya Ito, Kimihiko Motegi and Masakazu Teragaito}
{Generalized torsion and Dehn filling}


\title{Generalized torsion and Dehn filling}

\author{Tetsuya Ito}%
\address{Department of Mathematics, Kyoto University, Kyoto 606-8502, JAPAN}
\email{tetitoh@math.kyoto-u.ac.jp}
\thanks{The first named author has been partially supported by JSPS KAKENHI Grant Number JP16H02145 and JP19K03490.}

\author{Kimihiko Motegi}%
\address{Department of Mathematics, Nihon University, 
3-25-40 Sakurajosui, Setagaya-ku, 
Tokyo 156--8550, Japan}
\email{motegi@math.chs.nihon-u.ac.jp}
\thanks{The second named author has been partially supported by JSPS KAKENHI Grant Number 19K03502 and Joint Research Grant of Institute of Natural Sciences at Nihon University for 2019.}

\author{Masakazu Teragaito}
\address{Department of Mathematics and Mathematics Education, Hiroshima University, 
1-1-1 Kagamiyama, Higashi-Hiroshima, 739--8524, Japan}
\email{teragai@hiroshima-u.ac.jp}
\thanks{The third named author has been partially supported by JSPS KAKENHI Grant Number JP16K05149.}


\maketitle

\begin{abstract}
A \textit{generalized torsion element\/} is a non-trivial element such that some non-empty finite product of its conjugates is the identity. 
We construct a generalized torsion element of the fundamental group of a 3-manifold obtained by Dehn surgery along a knot in $S^{3}$.
\end{abstract}

\maketitle

\section{Introduction}
\label{introduction}

A non-trivial element $g$ of a group $G$ is a \textit{generalized torsion element\/} if there exist $x_1,\ldots, x_k \in G$ such that 
\begin{equation}
\label{eqn:gtorsion}
 (x_1 g x_1^{-1})(x_2 g x_2^{-1}) \cdots(x_{k}g x_{k}^{-1})=1. \end{equation}
That is, some non-empty finite product of its conjugates is the identity.
The \emph{order\/} of a generalized torsion element is the minimum $k>1$ that satisfies (\ref{eqn:gtorsion}).

One of motivations for exploring generalized torsion elements comes from the bi-orderability of groups. 
A group is \textit{bi-orderable\/} if it admits a \emph{bi-ordering\/}, a total ordering $<$ such that $agb<ahb$ for any $g,h,a,b\in G$ whenever $g<h$ holds. 
In this paper, we adapt the convention that the trivial group $\{1\}$ is bi-orderable.

A bi-orderable group $G$ has no generalized torsion element.
For, if $1 < g$ for a bi-ordering $<$ of $G$, 
then $1 = x 1 x^{-1} < xgx^{-1}$ for all $x \in G$, 
hence \[1 < (x_1 g x_1^{-1})(x_2 g x_2^{-1}) \cdots(x_{k}g x_{k}^{-1})\]
for any $x_1,\ldots,x_k \in G$.
The case $g<1$ is similar.

In many situations one refutes the bi-orderability by finding a generalized torsion element. 
However, 
in general a group without generalized torsion element, 
which is called an $R^*$--group or a $\Gamma$--torsion-free group in literatures \cite{BL,LMR,MR0,MR}, 
is not necessarily bi-orderable; 
see \cite[Chapter 4]{MR}, for example.

In \cite{MT2} the second and third authors conjecture the following. 

\begin{conjecture}[\cite{MT2}]
\label{conj:bo}
Let $G$ be the fundamental group of a $3$--manifold. 
Then,
$G$ is bi-orderable if and only if
$G$ has no generalized torsion element.
\end{conjecture}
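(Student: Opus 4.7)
The plan is to approach Conjecture \ref{conj:bo} via the geometric decomposition of 3-manifolds, handling each type of geometric piece separately. One direction --- that bi-orderable groups have no generalized torsion --- is already established in the excerpt, so the entire challenge lies in the converse: if $\pi_1(M)$ has no generalized torsion, then it admits a bi-ordering.

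First I would reduce to the case of prime, irreducible 3-manifolds. Since a free product of bi-orderable groups is bi-orderable (Vinogradov), and a generalized torsion element in any free factor persists in the free product, the conjecture for $M$ follows from the conjecture for its prime summands. Next, I would invoke the JSJ/geometric decomposition and attempt a local-to-global statement: if each Seifert fibered and each hyperbolic piece has bi-orderable fundamental group, then so does $\pi_1(M)$. This step requires delicate amalgamation results --- bi-orderability is not in general preserved under amalgamation over $\mathbb{Z}^2$ --- and one must track how a potential generalized torsion element arising at a JSJ torus obstructs gluing of orderings. A convex-extension argument on the Bass--Serre tree of the decomposition, using the peripheral structure, seems the natural vehicle.

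For the Seifert fibered pieces the situation is relatively tractable: bi-orderability of $\pi_1$ is controlled by the Seifert invariants together with the orientability of the base orbifold, and when the group fails to be bi-orderable one typically extracts an explicit generalized torsion element either from the regular fiber class or from torsion in the orbifold base. I would carry out this case analysis carefully, producing for every non-bi-orderable Seifert fibered 3-manifold a generalized torsion element in $\pi_1$, thereby verifying the conjecture in this class.

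The main obstacle is the hyperbolic case. Here $\pi_1(M)$ is a torsion-free lattice in $\mathrm{PSL}(2,\mathbb{C})$, and no known criterion tells us when such a lattice is bi-orderable. Even for hyperbolic knot complements in $S^3$, bi-orderability has been verified only in scattered examples, typically via the Alexander polynomial having all real positive roots (following Perron--Rolfsen), and the reverse implication is completely open. The techniques of the present paper --- producing generalized torsion via Dehn filling --- suggest one productive tactic: exhibit enough generalized torsion elements in Dehn surgery quotients to force non-bi-orderability in precisely the cases where it is suspected, then try to pull these witnesses back to $\pi_1(M)$ itself. Nevertheless, an intrinsic construction inside a hyperbolic group, starting only from the hypothesis that no bi-ordering exists, remains the decisive missing ingredient, and I expect it to require genuinely new ideas beyond the toolkit currently available.
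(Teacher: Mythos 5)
What you have written is a research program, not a proof, and you say so yourself: you identify the hyperbolic case as ``the decisive missing ingredient'' and leave the amalgamation over JSJ tori as a step that ``requires delicate amalgamation results'' without supplying them. Those are precisely the two places where the argument must actually be carried out, so the proposal contains genuine gaps at its core. To be fair, the statement you were asked to prove is Conjecture~\ref{conj:bo}, which the paper itself does not prove either --- it is an open conjecture from \cite{MT2}, verified there only for non-hyperbolic geometric $3$--manifolds and some further examples, and the present paper only supplies additional supporting evidence (via Conjecture~\ref{conj:g-torsion_Dehn_surgery} and Theorems~\ref{torus_cable_composite}, \ref{theorem:g_torsion_span_disk}, \ref{theorem:g12bridge}). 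So no complete proof exists to compare against; but your text should not be mistaken for one.

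Concretely, the two unfilled steps are these. First, even granting that every geometric piece of $M$ has bi-orderable fundamental group, bi-orderability is not preserved under amalgamated products or HNN extensions over $\mathbb{Z}^2$ in general, and conversely the absence of generalized torsion in each vertex group of the Bass--Serre tree does not obviously propagate to the whole group (indeed, Remark~\ref{composite} of this paper exhibits a closed manifold whose JSJ pieces all have generalized-torsion-free fundamental groups while $\pi_1(M)$ has a generalized torsion element --- so any local-to-global argument must use the orderings themselves, not merely the absence of generalized torsion, and you do not explain how). Second, for a closed hyperbolic $3$--manifold there is no known mechanism for producing either a bi-ordering or a generalized torsion element from the hypothesis alone; your suggestion to ``pull back'' witnesses from Dehn surgery quotients cannot work as stated, since a generalized torsion element of a quotient $\pi_1(K(r))$ need not lift to one in $G(K)$ --- that is exactly the phenomenon this paper is about. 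The reduction to prime summands via Vinogradov's theorem and the Seifert fibered case are sound and essentially reproduce what is already in \cite{MT2}, but the remainder is a statement of the problem rather than a solution.
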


In \cite{MT2} this conjecture is verified for non-hyperbolic geometric $3$--manifolds and some other examples. 

Since a finitely generated bi-orderable group has $\mathbb{Z}$ as a quotient,  
the fundamental group of any rational homology $3$--sphere is not bi-orderable; see \cite{BRW} for example. 
Then Conjecture~\ref{conj:bo} says that the fundamental group of any rational homology $3$--sphere has  
a generalized torsion element. 
Recall that $3$--manifolds obtained by Dehn surgery along a knot in $S^{3}$ are rational homology $3$--spheres except when the surgery is longitudinal. 
In this article we concentrate our attention on such $3$--manifolds. 

Let $N(K)$ be a tubular neighborhood of $K$ and denote the knot exterior by $E(K)=S^{3} -\mathrm{Int} N(K)$. 
The knot group $G(K)$  means the fundamental group $\pi_1(E(K))$. 
Throughout the paper we take a base point of $E(K)$ which lies on $\partial E(K)$. 
We denote a meridian and a (preferred) longitude by $\mu, \lambda \in G(K)$, respectively.

Denote by $K(m/n)$ the $3$--manifold obtained by $m/n$--Dehn surgery on a knot $K$ in $S^{3}$. 
Note that $\pi_1(K(\infty))=\pi_1(S^{3})=\{1\}$ is bi-orderable and does not have a generalized torsion element 
by definition. 
Then Conjecture~\ref{conj:bo} implies: 

\begin{conjecture}
\label{conj:g-torsion_Dehn_surgery}
Let $K$ be a non-trivial knot in $S^3$. 
Then $\pi_1(K(r))$ admits a generalized torsion element if $r \ne 0, \infty$.
\end{conjecture}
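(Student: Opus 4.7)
The plan is to show that the image of the meridian $\mu$ in $\pi_1(K(r))$ is itself a generalized torsion element. The Dehn filling relation $\mu^{p}\lambda^{q} = 1$ (for $r = p/q$ with $p,q$ both non-zero) reduces this to expressing $\lambda^{q}$ as a \emph{positive} product of conjugates of $\mu$: from $\lambda^{q} = \prod_{i=1}^{m} y_{i}\mu y_{i}^{-1}$ one obtains
\[
1 \;=\; \mu^{p}\cdot\prod_{i=1}^{m} y_{i}\mu y_{i}^{-1},
\]
a generalized torsion relation of length $p+m$ for $\mu$. I would therefore organize the argument around producing such a positive expression.

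I would divide the plan by the geometric type of $K$. For non-hyperbolic non-trivial knots (torus knots, cables, and knots with non-trivial JSJ decomposition), $K(r)$ is a graph manifold, and the Seifert-fibered pieces of its JSJ decomposition have well-understood fundamental groups containing generalized torsion elements coming from cone points of the base orbifold; since JSJ pieces inject into $\pi_{1}(K(r))$, these elements persist and give the desired generalized torsion. For such knots the argument is essentially a packaging of results already available in the non-hyperbolic geometric cases referenced in the introduction (cf.\ the discussion preceding Conjecture \ref{conj:bo}).

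The substantive new case is hyperbolic $K$. Here the strategy is to exploit the Wirtinger presentation of $G(K)$: every generator is a conjugate of $\mu$, so the longitude $\lambda\in [G(K),G(K)]$ can be written as a product of commutators $[a_{i},b_{i}]$ with $a_{i},b_{i}$ conjugates of $\mu$. Each such commutator equals $(a_{i}b_{i}a_{i}^{-1})\cdot b_{i}^{-1}$, i.e.\ one positive conjugate of $\mu$ multiplied by one negative one. The surgery identity $\mu^{-p} = \lambda^{q}$ then provides a substitution rule that trades blocks of $p$ negative conjugates of $\mu$ for $-q$ factors of $\lambda$ (or vice versa), and the hope is that iterating such substitutions eliminates all negative conjugates of $\mu$ and produces a genuinely positive expression for $\lambda^{q}$.

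The crucial obstacle is precisely this positivity control. By the bi-ordering criterion recalled in the introduction, producing a non-empty positive product of conjugates of $\mu$ equal to $1$ is equivalent to forbidding every bi-ordering of $\pi_{1}(K(r))$ in which $\mu>1$; no general geometric principle seems to force the required cancellations for all hyperbolic $K$ and all non-zero slopes. A realistic target is therefore to establish the conjecture under hypotheses that provide external obstructions to bi-orderability --- for instance that $G(K)$ already contains a generalized torsion element whose image in $\pi_{1}(K(r))$ is non-trivial, that the Alexander polynomial of $K$ has no positive real root, or that $r$ lies in a restricted range (large $|p|$, $L$-space slopes) where additional rigidity can be invoked --- rather than the conjecture in full generality.
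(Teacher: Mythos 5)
The statement you are asked about is stated in the paper as a \emph{conjecture}; the paper offers no proof of it, only supporting special cases, and your proposal correctly arrives at the same conclusion that the general case is out of reach. Moreover, your core mechanism --- writing $\lambda$ (hence $\lambda^{n}$) as a positive product of conjugates of $\mu$ and then feeding this into the filling relation $\mu^{m}\lambda^{n}=1$ to exhibit $1$ as a non-empty positive product of conjugates of $\mu$ --- is precisely the mechanism of Lemma~\ref{lemma:GW} and Theorem~\ref{theorem:g_torsion_span_disk}, except that the paper produces the positive factorization geometrically from a $(p,0)$--singular spanning disk (or, in Theorem~\ref{thm:k-positive}, from a positive-type diagram) rather than by trying to cancel negative conjugates coming from a Wirtinger/commutator expression of $\lambda$; the resulting restriction $m/n\ge p$ is exactly the ``restricted range of slopes'' you anticipate being forced. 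Your treatment of the non-hyperbolic knots via generalized torsion in Seifert fibered JSJ pieces matches Theorem~\ref{torus_cable_composite}. One small correction: the existence of a relation $1\in\langle\!\langle\mu\rangle\!\rangle^{+}$ is \emph{not} equivalent to the nonexistence of a bi-ordering with $\mu>1$; only one implication holds, since groups without generalized torsion need not be bi-orderable, as the paper itself notes in the introduction.
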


For several classes of knots,
we have supporting evidences for
Conjecture \ref{conj:g-torsion_Dehn_surgery}.

\begin{theorem}
\label{torus_cable_composite}
Let $K$ be a knot in $S^3$.  
Then $\pi_1(K(m/n))$ admits a generalized torsion element in the following cases. 
\begin{enumerate}
\item
$K$ is a non-trivial torus knot, and $m/n \ne \infty$\textup{;}
\item
$K$ is a $(p, q)$--cable knot, and $|pqn-m| \ne 1$\textup{;}
\item
$K$ is a composite knot, and $n \ne 0, 1$. 
\end{enumerate}
\end{theorem}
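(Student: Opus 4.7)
The uniform strategy in all three cases is to identify a Seifert fibered piece in $K(m/n)$---either the entire manifold, a connected summand, or a JSJ piece---whose fundamental group is known to contain a generalized torsion element via the results of~\cite{MT2} on non-hyperbolic geometric $3$-manifolds. Since a generalized torsion element of any $\pi_{1}$-injective subgroup yields one in the ambient group, it suffices to exhibit the appropriate Seifert fibered piece in each case.

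For case (1), Moser's theorem tells us that $T_{p,q}(m/n)$ is a Seifert fibered manifold for all $m/n \ne pq,\infty$, and is the reducible $L(p,q)\#L(q,p)$ when $m/n = pq$. In the Seifert fibered subcases the appeal to~\cite{MT2} is immediate; in the reducible subcase, torsion in either lens-space summand lifts to a generalized torsion element of the free-product group $\pi_{1}(L(p,q))*\pi_{1}(L(q,p))$.

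For case (2), one writes $E(K) = E(K')\cup_{T} CS(p,q)$, where $K'$ is the companion and $CS(p,q)$ is a Seifert fibered cable space with a single exceptional fiber. The $m/n$-Dehn filling of $\partial E(K)$ extends uniquely to a Seifert fibered Dehn filling of $CS(p,q)$, whose result is a solid torus exactly when $|pqn-m|=1$ and is otherwise a proper Seifert fibered JSJ piece of $K(m/n)$. Under $|pqn-m|\ne 1$ this piece carries a nontrivial central regular fiber, and a generalized torsion element is produced either by closing the piece up (reducing, via a further filling of $T$, to a situation covered by case (1)) or by working with the Seifert fibered piece with boundary directly.

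For case (3), with $K = K_{1}\#K_{2}$, the two swallow-follow tori sitting on either side of the decomposing annulus are essential in $E(K)$, and under the hypothesis $n\ne 0,1$ they remain incompressible after $m/n$-Dehn filling, each bounding a Seifert fibered JSJ piece of $K(m/n)$ arising from a filled cable-space region. A generalized torsion element is again extracted from the central fiber of this Seifert fibered piece. The main obstacle, I expect, is case (3): one must verify carefully that the swallow-follow tori remain incompressible in $K(m/n)$ precisely when $n\ne 0,1$ (the slope $n=1$ giving integer surgery that annihilates the cabling structure, and $n=0$ being trivial), and that the resulting Seifert fibered piece injects $\pi_{1}$-wise into $\pi_{1}(K(m/n))$ so that a generalized torsion element there transfers to the ambient group.
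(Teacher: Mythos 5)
Your strategy for parts (2) and (3) --- decompose $E(K)$ along essential tori, observe that the Dehn filling is absorbed into a single Seifert fibered piece, and extract a generalized torsion element from that piece via \cite{MT2} --- is exactly the paper's strategy. But two of your three cases contain concrete inaccuracies, and part (1) is handled by the paper in a genuinely different and more elementary way.

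In part (1) the paper does not invoke Moser's theorem: it writes $G(T_{p,q})=\langle x,y\mid x^p=y^q\rangle$, expands $[x,y^q]$ as a product of $q$ conjugates of $[x,y]$, notes $[x,y^q]=[x,x^p]=1$, and concludes that $[x,y]$ is a generalized torsion element of $G(K)$ surviving in every non-cyclic quotient; the cyclic quotients arising here are finite and nontrivial, hence have honest torsion. Your route through Moser plus \cite{MT2} has a gap at the slope $r=0$: $T_{p,q}(0)$ is Seifert fibered but is not a rational homology sphere, so the non-bi-orderability you need before \cite{MT2} can hand you a generalized torsion element is not automatic and must be argued separately --- the cleanest such argument being precisely the commutator computation above. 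In part (2) your dichotomy ``solid torus iff $|pqn-m|=1$, otherwise Seifert fibered'' misses the case $|pqn-m|=0$, where the filled cable space is the connected sum of a solid torus and a lens space (reducible, not Seifert fibered); the paper disposes of this subcase by noting that $\pi_1(K(m/n))$ then has honest torsion. In part (3) the relevant piece is not a ``filled cable-space region'': $E(K_1\sharp K_2)$ is $E(K_1)\cup E(K_2)\cup C$ with $C$ the composing space $(\text{disk with two holes})\times S^1$, and the decisive observation --- which replaces your incompressibility analysis of swallow-follow tori --- is that the $S^1$-fiber of $C$ on $\partial E(K)$ is the meridian of $K$, so the $m/n$-filling turns $C$ into a Seifert fibered space over the annulus with one exceptional fiber of index $n\ge 2$; Lemma~3.5 of \cite{MT2} then applies directly. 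For $n=1$ no exceptional fiber is created and, as the paper remarks, the statement genuinely fails for integral slopes, so no amount of care with the swallow-follow tori will rescue that case.
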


For the first, 
if $G(K)$ contains a generalized torsion element $g$, 
then its image in $\pi_1(K(r))$ is also a generalized torsion element whenever $g$ remains non-trivial in $\pi_1(K(r))$. 
It is shown in \cite{IchiharaMT} that for any hyperbolic knot $K$,
there are only finitely many slopes $r$ such that $g$ becomes trivial in $\pi_1(K(r))$. 
So if $G(K)$ contains a generalized torsion element, 
$\pi_1(K(r))$ also contains such an element for all but finitely many slopes $r$. 

On the other hand, some knot $K$, 
such as the figure-eight knot, has bi-orderable knot group \cite{PR},
and hence $G(K)$ has no generalized torsion element. 
However, according to Conjecture~\ref{conj:g-torsion_Dehn_surgery}, $\pi_1(K(r))$ should have a generalized torsion element for $r\ne 0, \infty$.  
So some non-generalized torsion element becomes a generalized torsion element via Dehn fillings. 

The purpose of this article is to give several constructions of a generalized torsion element of $\pi_1(K(r))$ 
which is not the image of a generalized torsion element in $G(K)$. 
This gives new supporting evidences for Conjecture \ref{conj:g-torsion_Dehn_surgery}, hence, for Conjecture \ref{conj:bo}.

\bigskip

For a knot $K$ in $S^{3}$, a \emph{singular spanning disk\/} of $K$ is a smooth map $\Phi\colon D^{2} \rightarrow S^{3}$ (or, its image, by abuse of notation) such that $\Phi(\partial D^2) = K$ and that $K$ intersects $\Phi(\mathrm{int}\,D^2)$ transversely in finitely many points. 
Each intersection point $\Phi(\mathrm{int}\,D^2) \cap K$ has a sign according to the orientations after orienting $K$ and $D^2$ suitably. We say that $\Phi(D^2)$ is a $(p, q)$--\textit{singular spanning disk\/} if $K$ intersects $\Phi(\mathrm{int}\,D^2)$ positively in $p$ points and negatively in $q$ points.

A singular spanning disk appeared in early 3-dimensional topology; Dehn's lemma \cite{Papa, Hom} says that $K$ has a $(0, 0)$--singular spanning disk if and only if $K$ is the trivial knot. More generally, $K$ has a $(p, q)$--singular spanning disk with $0\leq p,q \leq 1$ if and only if $K$ is still the trivial knot \cite{Ito1}. 

\begin{theorem}
\label{theorem:g_torsion_span_disk}
Let $K$ be a knot in $S^3$. 
If $K$ has a $(p, 0)$--singular spanning disk, 
then the image of a meridian $\mu$ in $\pi_1(K(m\slash n))$ is a generalized torsion element of order $m$ whenever $\frac{m}{n} \ge p$. 
Similarly, if $K$ has a $(0, q)$--singular spanning disk,  
then the image of a meridian $\mu$ in $\pi_1(K(m\slash n))$  is a generalized torsion element  order $m$ whenever $\frac{m}{n}\le -q$. 
\end{theorem}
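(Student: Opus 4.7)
The plan is to extract from the $(p,0)$-singular disk an algebraic identity in $G(K)$ expressing $\lambda\mu^{p}$ as a product of $p$ conjugates of $\mu$, and then feed it into the Dehn filling relation $\mu^{m}\lambda^{n}=1$; the $(0,q)$-case will be parallel.

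Starting from $\Phi\colon D^{2}\to S^{3}$ with positive interior intersections $y_{1},\dots,y_{p}$, first perturb $\Phi$ so that a collar of $\partial D^{2}$ lies in $\partial E(K)$ and the $y_{i}$ remain transverse, then remove small open disks $d_{i}$ around each $y_{i}$ to obtain a planar surface $\Sigma=D^{2}\setminus\bigcup\mathrm{int}\,d_{i}$ with $\Phi(\Sigma)\subset E(K)$. Because the disk framing of $\partial D^{2}$ has self-linking number $p-0=p$ with $K$, the outer boundary of $\Sigma$ represents the slope $\lambda\mu^{p}$ on $\partial E(K)$, while each of the $p$ inner boundaries represents a meridian $\mu$. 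Choosing a basepoint on the outer boundary and connecting it to each hole by arcs in $\Sigma$ identifies $\pi_{1}(\Sigma)$ with a free group and, using the fact that the outer boundary equals the ordered product of the inner ones, reads off the relation
\begin{equation}
\lambda\mu^{p}=\prod_{i=1}^{p}a_{i}\mu a_{i}^{-1}=:X\quad\text{in }G(K),
\label{eq:diskrel}
\end{equation}
for suitable $a_{i}\in G(K)$; equivalently, $\lambda=X\mu^{-p}$.

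In $\pi_{1}(K(m/n))$ substitute \eqref{eq:diskrel} into $\mu^{m}\lambda^{n}=1$. Normalizing so that $n>0$, the hypothesis $m/n\ge p$ gives $m-np\ge 0$. A direct telescoping yields
\[
(X\mu^{-p})^{n}=X\cdot(\mu^{-p}X\mu^{p})\cdot(\mu^{-2p}X\mu^{2p})\cdots(\mu^{-(n-1)p}X\mu^{(n-1)p})\cdot\mu^{-np}=Z\mu^{-np},
\]
in which each factor $\mu^{-kp}X\mu^{kp}$ is itself a product of $p$ conjugates of $\mu$, so $Z$ is a product of $np$ such conjugates. Then $\mu^{m}\lambda^{n}=\mu^{m-np}\cdot(\mu^{np}Z\mu^{-np})$, where $\mu^{m-np}$ is a product of $m-np$ trivially conjugated copies of $\mu$ and $\mu^{np}Z\mu^{-np}$ is a product of $np$ conjugates of $\mu$. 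The Dehn filling relation therefore presents $1$ as a product of exactly $m$ conjugates of $\mu$, showing that the image of $\mu$ is a generalized torsion element of order at most $m$.

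For the matching lower bound, pass to $H_{1}(K(m/n))\cong\mathbb{Z}/m\mathbb{Z}$, generated by $[\mu]$: any product of $k$ conjugates of $\mu$ has homology class $k[\mu]$, which vanishes only when $m\mid k$. In particular, $\mu$ is nontrivial in $\pi_{1}$ whenever $m\ge 2$, and the minimum $k>1$ realizing such a relation is exactly $m$. The $(0,q)$-case is symmetric: the disk now gives $\lambda\mu^{-q}=\prod a_{i}\mu^{-1}a_{i}^{-1}$, and under $m/n\le -q$ (so $-m\ge qn$ with $n>0$) the same telescoping exhibits $\mu^{m}\lambda^{n}$ as a product of $|m|$ conjugates of $\mu^{-1}$, whose inverse is a product of $|m|$ conjugates of $\mu$. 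The main obstacle, I expect, lies in step one — establishing \eqref{eq:diskrel} with only $\mu$'s (no $\mu^{-1}$'s) on the right and with the framing correction $\mu^{p}$ correctly attached to $\lambda$ on the left — which is precisely where the sign hypothesis on the intersections enters; everything afterward is the telescoping above together with the homological counting.
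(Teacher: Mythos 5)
Your proposal is correct and follows essentially the same route as the paper: it first establishes the Greene--Wiest-type factorization $\lambda\mu^{p}=\prod_{i}a_{i}\mu a_{i}^{-1}$ from the signed singular disk (the paper's Lemma~\ref{lemma:GW}), then substitutes into the surgery relation to exhibit $1$ as a product of $m$ conjugates of $\mu$ when $m-np\ge 0$, and finally uses $H_{1}(K(m/n))\cong\mathbb{Z}_{m}$ to pin down the order. The only cosmetic difference is that you build the factorization by deleting disks around the intersection points to get a planar surface, whereas the paper uses based loops and arcs inside $D^{2}$; the content is identical.
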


This construction illustrates how a generalized torsion element arises via Dehn fillings. 
A typical and fundamental example of $(p, 0)$-- or $(0, q)$--singular spanning disk is a clasp disk having the same sign of clasps. 

\begin{example}
Let $D$ be a standard disk in $S^3$.
Attach mutually disjoint arcs $a_1,a_2,\dots, a_p$ to $D$ so that
$a_i\cap D=a_i\cap \partial D=\partial a_i$.
These arcs may be mutually linked and knotted. 
See Figure \ref{fig:clasp-d}.
Then, replace each arc $a_i$ with a band having a single clasp
as illustrated in Figure \ref{fig:clasp-d}.
These bands may have several full twists. 
Let $K$ be a knot bounding this disk with $p$ clasps. 
We remark that all clasps have the same sign.
Then Theorem~\ref{theorem:g_torsion_span_disk} asserts that 
$\pi_1(K(m/n))$ has a generalized torsion element whenever $m/n \ge 2p$

\begin{figure}[htb]
\centering
\includegraphics[width=0.7\textwidth]{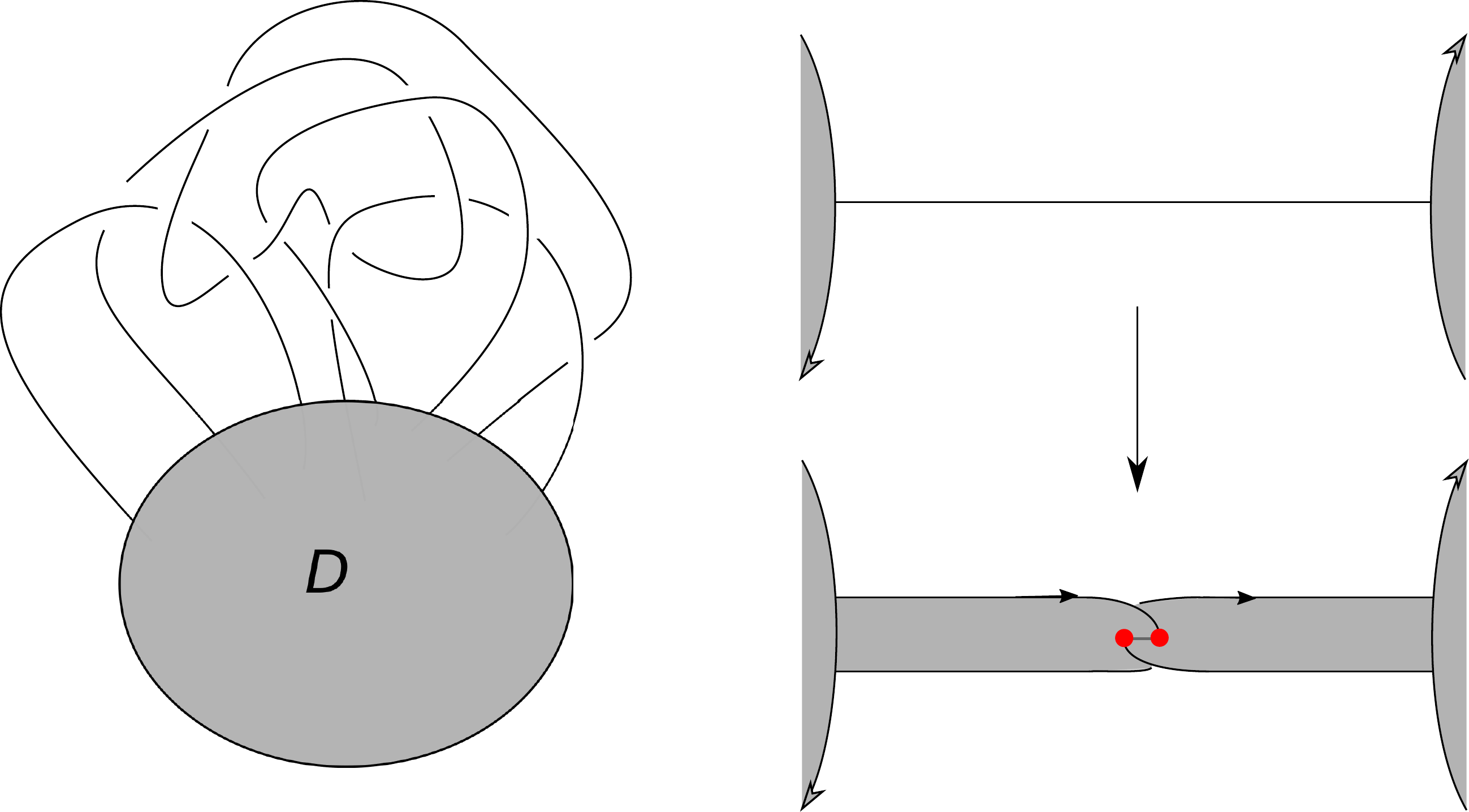}
\caption{A disk $D$ with arcs $a_1,a_2,\dots,a_p$ (Left).  
Replace each arc with a band having a single clasp (Right).}
\label{fig:clasp-d}
\end{figure}

\end{example}

We will discuss further applications of this construction in Section~\ref{examples}. 
Theorem~\ref{theorem:g_torsion_span_disk} is used in \cite{IMT-decomposition} 
to give an example of 3-manifold with non-trivial torus (JSJ) decomposition 
$M=M_1 \cup M_2 \cup \cdots \cup M_n$ such that each $\pi_1(M_i)$ has no generalized torsion element, 
but $\pi_1(M)$ has a generalized torsion element; see Remark~\ref{composite}. 

\medskip

Let us take a closer look at Dehn surgery along a double twist knot. 
A double twist knot is a $2$-bridge knot of genus one given by
Conway's notation $C[2p,2q]\ (p>0)$ as in Figure \ref{g12bridge}.
Double twist knots exhaust all genus one $2$-bridge knots. 

\begin{figure}[htb]
\centering
\includegraphics[width=0.45\textwidth]{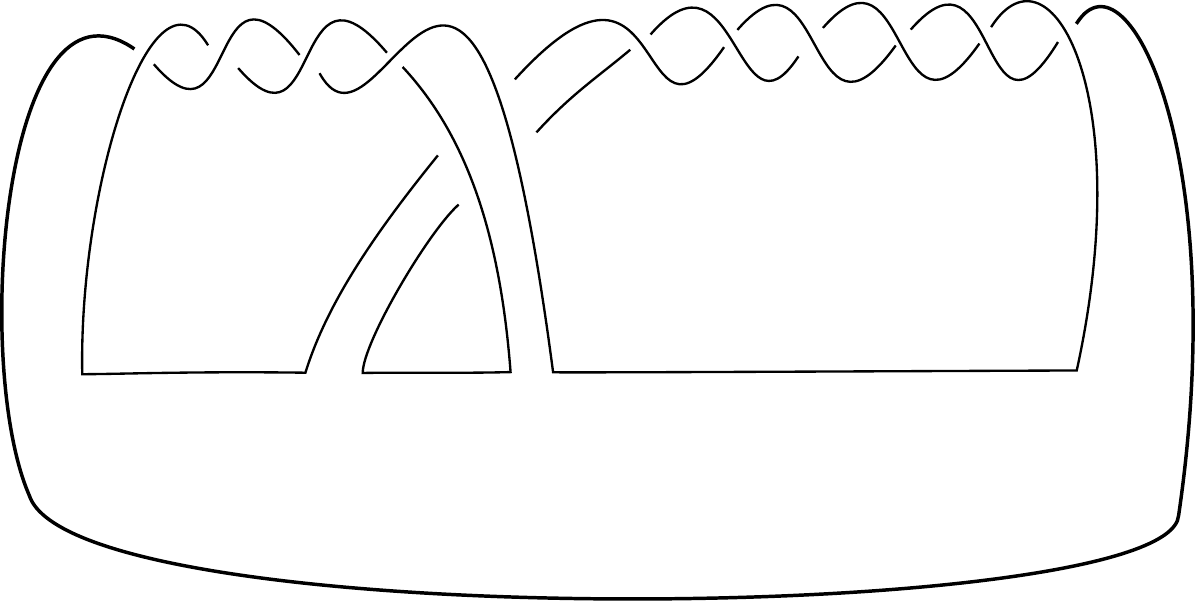}
\caption{$C[2p,2q]$ ($p=2, q=3$)}
\label{g12bridge}
\end{figure}

A double twist knot is a generalization of a twist knot, 
which provides a quite interesting example in a study of generalized torsion elements. 
For a twist knot $K_q:=C[2,2q]$, the knot group $G(K_q)$ has a generalized torsion element if $q<0$. 
If $q = -1$, then $K_{-1}$ is the right-handed trefoil knot, 
for which $G(K_{-1})$ is known to have a generalized torsion element \cite[Corollary~3.4]{NR}. 
For $q = -2$, 
Naylor and Rolfsen \cite[Theorem~4.1]{NR} show that $G(K_{-2})$ admits a generalized torsion element. 
Surprisingly, this was the first example of hyperbolic knot whose knot group admits such an element. 
The general case $q \leq -2$ was proven by the third author \cite{Te}. 
On the other hand, when $q\geq 0$ the knot group $G(K_q)$ is bi-orderable \cite{CDN}, 
and hence it does not have a generalized torsion element.

It is not hard to see that $K=C[2p,2q]$ admits a $(2p,0)$--singular spanning disk, 
and a $(0,2q)$--singular spanning disk if $q>0$ or, a $(-2q,0)$--singular spanning disk if $q<0$. 
So by Theorem \ref{theorem:g_torsion_span_disk} 
the image of a meridian in $\pi_1(K(m/n))$ is a generalized torsion if $m \geq 2np$, or $m \leq -2nq$ (when $q>0$), or, $m\geq -2nq$ (when $q<0$). 
The next theorem improves these conditions.

\begin{theorem}
\label{theorem:g12bridge}
Let $K =C[2p,2q]$ $(p>0)$ be a genus one two-bridge knot. 
Then the image of the meridian of $K$ in $\pi_1(K(m/n))$ $(n \ge 1)$ is a generalized torsion element provided when 
\begin{enumerate}
\item
$m\ge (2n-1)p$\textup{;} or
\item 
$q>0$ and $m\le -(2n-1)q$\textup{;} or 
\item 
$q<0$ and $m\ge -(2n-1)q$. 
\end{enumerate}
\end{theorem}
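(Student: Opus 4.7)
The guiding observation is that Theorem \ref{theorem:g_torsion_span_disk} applied to the $(2p,0)$-singular spanning disk already yields the weaker bound $m \ge 2np$, so the task is to save an additional factor of $p$ (or of $q$). Concretely, from the $(2p,0)$-disk one has $\lambda = \mu^{-2p} w$ in $G(K)$, where $w$ is a product of $2p$ positive conjugates of $\mu$. Substituting this into $\mu^m \lambda^n = 1$ and applying the standard identity $(uv)^n = \prod_{k=0}^{n-1}(v^k u v^{-k})\cdot v^n$ eventually rewrites the filling relation as $\mu^{m-2np}$ being equal to a product of $2np$ conjugates of $\mu^{-1}$; inverting and using $m \ge 2np$ packages this into a product of $m$ positive conjugates of $\mu$ equal to $1$.

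My plan is to exploit the genus-one structure of $K$ to save the extra $p$. The Seifert surface $F$ of $K$ is a once-punctured torus with two twist bands, and $\lambda = [a,b]$ in $G(K)$, where $a,b$ are the respective band cores and $\pi_1(F)=\langle a,b\rangle$ is free. A direct computation from the standard diagram (in effect, a band-level analogue of the $(2p,0)$-disk construction applied to a single band rather than to the whole knot) produces, for the $2p$-twist band, an identity of the shape $\mu^{p}a = v_a$ in $G(K)$ with $v_a$ a product of $p$ positive conjugates of $\mu$. Substituting $a = \mu^{-p}v_a$ into $\lambda = aba^{-1}b^{-1}$ and expanding $\mu^m\lambda^n = 1$, I would carry out the iterative rearrangement so that $n-1$ of the $n$ occurrences of the factor $\mu^{p}$ (from $a^{-1} = v_a^{-1}\mu^{p}$) cancel against $n-1$ of the $\mu^{-p}$ factors (from $a$), leaving a single surplus $\mu^{-p}$ which is absorbed into the prefix $\mu^m$. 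What remains is a product of $(2n-1)p$ positive conjugates of $\mu$, which combined with $\mu^{m-(2n-1)p}$ (nonnegative under the hypothesis $m\ge(2n-1)p$) gives a product of $m$ positive conjugates of $\mu$ equal to $1$.

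Non-triviality of $\mu$ in $\pi_1(K(m/n))$ is immediate since $\mu$ generates $H_1=\mathbb{Z}/m\mathbb{Z}$ and $m\ge 1$. Cases (2) and (3) are symmetric: one applies the analogous band-level identity to the $2q$-twist band, with the sign of the conjugates chosen according to whether $q>0$ (yielding negative conjugates and the condition $m\le -(2n-1)q$) or $q<0$ (yielding positive conjugates and the condition $m\ge -(2n-1)q$).

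The main obstacle is the iteration step: arranging the expansion of $[a,b]^n$ via repeated applications of the formula $(uv)^n=\prod_{k=0}^{n-1}(v^{k}uv^{-k})\cdot v^{n}$ so that \emph{exactly} one copy of $\mu^{-p}$ is left unpaired (not zero, not two) and every surviving factor $\mu^{\pm 1}$ ends up as a positive conjugate of $\mu$. Tracking the conjugating elements through $n$ iterations of this rearrangement, together with the band-level identity $\mu^{p}a=v_a$, is precisely what distinguishes the present argument from the cruder calculation that only yields $m\ge 2np$.
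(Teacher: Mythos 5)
Your overall strategy---write the longitude as a commutator of band elements, convert the meridian--band interaction into products of conjugates of $\mu$, and split off one ``half'' of a commutator to improve the crude bound $m\ge 2np$ to $m\ge (2n-1)p$---is the same strategy as the paper's, which works with the Lin presentation $G(K)=\langle a,b,t\mid ta^pt^{-1}=b^{-1}a^p,\ tb^{-q}a^{-1}t^{-1}=b^{-q}\rangle$, longitude $[b^q,a^p]$, and the semigroup $\langle\!\langle t\rangle\!\rangle^{+}$ of non-empty products of conjugates of $t$. But your plan has a genuine gap at its central step, and it is a sign problem rather than the iteration bookkeeping that you flag as the main obstacle.

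From the single identity $\mu^{p}a=v_a$ you get $a^{-1}=v_a^{-1}\mu^{p}$, and $v_a^{-1}$ is a product of $p$ conjugates of $\mu^{-1}$. The commutator $\lambda=[a,b]$ contains both $a$ and $a^{-1}$ (and also $b$ and $b^{-1}$, which your plan never addresses), so any expansion of $\mu^{m}\lambda^{n}$ along these lines is a product of conjugates of $\mu$ of \emph{mixed} signs, and such a product certifies nothing about generalized torsion. The paper escapes this by deriving, from the letter-level relation $ta=b^{q}tb^{-q}$, \emph{two} positive-conjugate identities: $t^{p}a^{p}\in\langle\!\langle t\rangle\!\rangle^{+}$ \emph{and} $b^{q}t^{p}b^{-q}a^{-p}=(b^{q}tb^{-q})^{p}(a^{-1})^{p}\in\langle\!\langle t\rangle\!\rangle^{+}$; the second handles the inverse $a^{-p}$ with \emph{positive} conjugates and is not a formal consequence of the band-level identity $\mu^{p}a^{p}=v_a$ alone. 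These combine into $t^{2p}[b^{q},a^{p}]=(t^{p}b^{-q})(b^{q}t^{p}b^{-q}a^{-p})(t^{p}b^{-q})^{-1}(t^{p}a^{p})\in\langle\!\langle t\rangle\!\rangle^{+}$, in which $b^{\pm q}$ survives only as a conjugating element. The paper then isolates $a^{p}$ and $a^{-p}$ from the surgery relation, shows each lies in $\langle\!\langle t\rangle\!\rangle^{+}$ using that $m-(2n-1)p\ge 0$, and concludes $1=a^{p}a^{-p}\in\langle\!\langle t\rangle\!\rangle^{+}$. This replaces your cancellation scheme, whose count ($n$ copies of $\mu^{p}$ and $n$ of $\mu^{-p}$, cancel $n-1$ pairs, yet only a single surplus $\mu^{-p}$ remains) does not balance as stated. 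Until you produce a positive-conjugate expression for $a^{-p}$ and explain how $b^{\pm1}$ is absorbed, the argument does not close.
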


\begin{example}
Let $K=C[2,2]$, which is the figure-eight knot.
By Theorem  \ref{theorem:g12bridge},
if either $m\ge 2n-1$ or $m\le -(2n-1)$, then $\pi_1(K(m/n))$ has
a generalized torsion element.
In particular, 
$\pi_1(K(r))$ has a generalized torsion element for all non-zero integers $r$. 
Since $\pi_1(K(0))$ is known to be bi-orderable, 
$\pi_1(K(0))$ has no generalized torsion element. 
\end{example}

In the final section we will propose some questions. 

\section{Proof of Theorem~\ref{torus_cable_composite}} 

In this section we prove Theorem~\ref{torus_cable_composite}. 

\smallskip

(1) The $(p,q)$-torus knot group $G(K)=G(T_{p,q})\ (0<p<q)$ is presented as $\langle x,y \: | \: x^{p}=y^{q} \rangle$.
First we observe that the commutator $[x, y]$ is a generalized torsion element in $G(K)$ \cite{NR}. 
Note that
\[
[x,y^q]= [x,y](y^{-1}[x,y]y)(y^{-2}[x,y]y^2)\cdots (y^{-(q-1)}[x,y]y^{q-1}), 
\]
where $[a,b]=a^{-1}b^{-1}ab$.
However, $[x,y^q]=[x,x^p]=1$ in $G(K)$.
Since $[x,y]\ne 1$,  
this implies that $[x,y]$ is a generalized torsion element in $G(K)$.

Recall that $\pi_1(K(r))$ is a quotient group of $G(K)$. 
Since a non-trivial finite cyclic group obviously has a torsion element, 
we prove that for every 
non-abelian (equivalently, non-cyclic) quotient group of the knot group $G(K)$, 
the image of $[x, y]$ remains a generalized torsion element. 

Let $\overline{G}$ be a 
non-abelian quotient group of $G(K)$ 
and we denote by $\overline{g}$ the image of $g \in G(K)$ under the quotient map $G(K) \rightarrow \overline{G}$. 
Then we can assume that $\overline{[x,y]}\ne 1$, 
for otherwise $\overline{G}$ is a cyclic group. 
Thus $\overline{[x,y]}$ remains to be a generalized torsion element in $\overline{G}$.

\medskip 

(2)\ 
Let $K$ be a $(p, q)$--cable of a knot $k$.
We may decompose $E(K)$ as the union of $E(k)$ and the $(p, q)$--cable space $C_{p, q}$, 
which is a Seifert fibered manifold over the annulus with an exceptional fiber of index $q \ge 2$. 
Then $K(m/n)$ is the union of $E(k)$ and $C_{p, q} \cup_{m/n} (S^1 \times D^2)$.  
By the assumption $C_{p, q} \cup_{m/n} (S^1 \times D^2)$ is a Seifert fibered manifold over the disk with two exceptional fibers of indices  
$q$ and $|pqn - m| \ge 2$, 
or the connected sum of the solid torus and a lens space if $|pqn - m| = 0$. 
In the former case, \cite[Lemma~3.5]{MT2} shows that 
the subgroup $\pi_1(C_{p, q}\cup_{m/n}(S^1 \times D^2))$ of $\pi_1(K(m/n))$ has a generalized torsion element. 
In the latter case, $\pi_1(K(m/n)$ has a torsion element. 

\medskip 

(3)\ 
We may write $K = K_1 \sharp K_2$, where $K_1$ and $K_2$ are both non-trivial knots. 
Then $E(K)$ consists of $E(K_1)$, $E(K_2)$ and the $2$--fold composing space $C =$ [disk with $2$ holes] $\times S^1$, 
where $\partial = \partial E(K) \cup \partial E(K_1) \cup \partial E(K_2)$. 
Note that the $S^1$--fiber of $C$ on $\partial E(K)$ is a meridian of $K$. 
Thus for any non-integral slope $m/n$ ($n \ge 2$), 
$C \cup_{m/n} (S^1 \times D^2)$ is a Seifert fibered manifold over the annulus with an exceptional fiber of index $n \ge 2$. 
Hence it follows from \cite[Lemma~3.5]{MT2} that the subgroup $\pi_1(C \cup_{m/n} (S^1 \times D^2))$ of $\pi_1(K(m/n))$ has a generalized torsion element. 
\hfill $\Box$

\begin{remark}
\label{composite}
In (3) we may decompose $(K_1 \sharp K_2)(r)$ as $E(K_1), E(K_2)$ and $C \cup_{m/n} (S^1 \times D^2)$. 
If one of $G(K_1)$ and $G(K_2)$ has a generalized torsion element $g$, 
then it remains a generalized torsion element in $\pi_1((K_1 \sharp K_2)(r))$ for all $r \in \mathbb{Q}$.  
Even when none of $G(K_1)$ and $G(K_2)$ has a generalized torsion element, 
Theorem~\ref{torus_cable_composite}(3) says that $\pi_1((K_1 \sharp K_2)(r))$ has such an element if $r \in \mathbb{Q}$ is non-integral. 
In this case the generalized torsion element lies in the fundamental group of a decomposing piece $C \cup_{m/n} (S^1 \times D^2)$. 

In the case where $r$ is integral, 
the fundamental group of $C \cup_{m/n} (S^1 \times D^2)$ does not have a generalized torsion element \cite[Theorem~1.5]{BRW}.  
So (3) does not hold in general. 
 
Let us take $K_i = K_{q_i}$, 
a positive twist knot, whose knot group $G(K_i)$ has no generalized torsion element \cite{CDN}. 
Then as above the fundamental group of each decomposing piece has no generalized torsion element. 
However, applying Theorem~\ref{theorem:g_torsion_span_disk}, 
we show that $\pi_1((K_1 \sharp K_2)(r))$ has a generalized torsion element if $r \ge 4$; see \cite{IMT-decomposition}.   
\end{remark}

\section{Generalized torsion elements arising from singular spanning disks}
\label{section:Dehn-surgery}

In this section we prove theorem \ref{theorem:g_torsion_span_disk}.
First we recall the following observation in \cite{GW}:

\begin{lemma}
\label{lemma:GW}
Suppose that $K$ has a $(p, q)$--singular spanning disk, 
then we have a factorization of the slope element $\lambda \mu^{p-q}$ as a product of $p$ conjugates of $\lambda$ and $q$ conjugates of $\lambda^{-1}$.
\end{lemma}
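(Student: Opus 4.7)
The plan is to apply the standard punctured-disk construction to the given singular spanning disk $\Phi\colon D^2 \to S^3$. Let $x_1, \ldots, x_{p+q} \in \mathrm{int}\,D^2$ be the preimages of the intersection points, labelled so that the intersection at $x_i$ is positive for $1 \le i \le p$ and negative otherwise, and set $\varepsilon_i = \pm 1$ accordingly. Pick pairwise disjoint open disk neighbourhoods $U_i$ of the $x_i$ and let $\Sigma = D^2 \setminus \bigsqcup_i U_i$, a disk with $p+q$ holes. By transversality (after a small generic perturbation of $\Phi|_\Sigma$ to remove any accidental intersection with $K$), the image $\Phi(\Sigma)$ lies in $S^3 \setminus K$. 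Pushing the outer boundary $\Phi(\partial D^2) = K$ slightly off $K$ along the normal direction of the disk then yields a map $\tilde\Phi\colon \Sigma \to E(K)$.

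Since $\Sigma$ is a planar surface with $p+q+1$ boundary components, a choice of basepoint on the outer boundary together with arcs $\gamma_i$ to each inner boundary $c_i = \partial U_i$ gives the standard relation
\[
\partial_{\mathrm{out}}\Sigma \;=\; \prod_{i=1}^{p+q} \gamma_i\, c_i\, \gamma_i^{-1}
\]
in $\pi_1(\Sigma)$. Applying $\tilde\Phi_*$ and reading each factor in $G(K)$: the image of $c_i$ is a small loop encircling $K$ once at the $i$-th intersection, hence a conjugate of $\mu^{\varepsilon_i}$; the image of $\partial_{\mathrm{out}}\Sigma$ is a pushoff of $K$ along the singular disk, and so lies in the peripheral subgroup, representing some element $\lambda \mu^{f}$, where $f$ is the framing induced by $\Phi$.

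The remaining step is to show $f = p - q$. The cleanest route is to abelianise: since $\tilde\Phi(\Sigma)$ is a compact $2$-chain in $E(K)$, we have $[\partial\tilde\Phi(\Sigma)] = 0$ in $H_1(E(K)) \cong \mathbb{Z}\langle\mu\rangle$, and $\lambda$ is nullhomologous there. The outer boundary contributes $f$ to the $\mu$-coefficient; each $c_i$, oriented as the boundary of the removed hole $U_i$ (which reverses the natural meridian orientation around $K$), contributes $-\varepsilon_i$. Balancing gives $f = \varepsilon_1 + \cdots + \varepsilon_{p+q} = p - q$, and substituting back produces
\[
\lambda\mu^{p-q} \;=\; \prod_{i=1}^{p+q} \gamma_i\, \mu^{\varepsilon_i}\, \gamma_i^{-1},
\]
a product of $p$ conjugates of $\mu$ and $q$ conjugates of $\mu^{-1}$.

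The main technical obstacle is the orientation-and-sign bookkeeping needed to land on $f = p - q$ rather than $q - p$ or an off-by-one variant: one must consistently orient $D^2$, the meridian $\mu$, the disk pushoff of $K$, and each circle $c_i$ so that the homological balance above has the stated signs. Once these conventions are fixed, the factorization is simply the defining relation of $\pi_1$ of a disk with holes, transported into $G(K)$ by $\tilde\Phi_*$.
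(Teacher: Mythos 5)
Your proof is correct and follows essentially the same route as the paper's: both puncture the disk at the preimages of the intersection points, read off the planar-surface relation $\partial D = \prod_i \gamma_i c_i \gamma_i^{-1}$, and transport it into $G(K)$ to express the pushoff of $K$ as a product of $p$ conjugates of $\mu$ and $q$ conjugates of $\mu^{-1}$ (which is what the lemma should say --- the ``$\lambda$'' and ``$\lambda^{-1}$'' in its statement are typos, as the paper's own proof and later applications confirm). The only real difference is that the paper simply asserts that the curve $\Phi(\partial D)$ has slope $p-q$ on $\partial N(K)$, whereas you derive the framing $f=p-q$ by abelianizing the relation and using $[\lambda]=0$ in $H_1(E(K))$; this is a slightly more self-contained way to pin down that coefficient.
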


\noindent
\begin{proof}
Let $\Phi\colon D^{2} \rightarrow S^{3}$ be a $(p, q)$--singular spanning disk of $K$. 
By restricting $\Phi$ on a slightly smaller subdisk $D \subset D^{2}$,
$c:=\Phi(\partial D)$ is a simple closed curve on $\partial N(K)$, 
which has the slope $p-q$ hence it represents $\lambda \mu^{p-q} \in G(K)$. 

Let $x_1,\ldots, x_p \in D$ (resp. $y_1,\ldots, y_q$) be the preimage of the positive (resp. negative) intersections of $K$ and $\Phi(D)$.
We take small oriented loops $c_i$ around $x_i$ (resp. $d_j$ around $y_j$) 
so that the homological sum $[c_1] + \cdots + [c_p] + [d_1] + \cdots +[d_j] = [\partial D]$ 
in $H_1(D - \{ x_1, \dots, x_p, y_1, \dots, y_q \})$. 
Then $\Phi(c_i) = \mu_{c_i}$ and $\Phi(d_j) = \mu_{d_j}^{-1}$. See Figure~\ref{fig:singular_disk_2}. 

Take a base point $z \in \partial D$ of $D$ so that $\Phi(z)$ is a base point of $E(K)$. 
Then we take paths $a_i$ in $D$ from $z$ to a point on a loop $c_{i}$, 
and paths $b_j$ in $D$ from $z$ to a point on a loop $d_{j}$, 
so that their concatenation 
\[  (a_1\ast c_1 \ast \overline{a_1})\ast \cdots \ast (a_p\ast c_p \ast \overline{a_p})\ast (b_1 \ast d_1 \ast \overline{b_1}) \ast \cdots \ast (b_q \ast d_q \ast\overline{b_q})\]
is homotopic to $\partial D$ as a based loop in $D - \{x_1,\ldots,x_p,y_1,\ldots,y_q\}$.
Here $\ast$ represents the concatenation of paths, and $\overline{a}$ means the path $a$ with opposite orientation; see Figure~\ref{fig:singular_disk_2}. 

Then 
\[
[\Phi(a_i \ast c_i \ast \overline{a_i})] 
= [\Phi(a_i) \ast \Phi(c_i) \ast \overline{\Phi(a_i)}] 
= [a'_i \ast \mu_{c_i} \ast \overline{a'_i}]
= \alpha_i \mu \alpha_i^{-1} \in G(K), 
\]
where we slide $\mu_{c_i}$, 
together with $a'_i$, 
on $\partial N(K)$, 
so that $\mu_{c_i}$ becomes $\mu$ and $a'_i$ becomes a loop representing $\alpha_i$.  
Similarly  
\[
[\Phi(b_j \ast d_j \ast \overline{b_j})] 
= [\Phi(b_j) \ast \Phi(d_j) \ast \overline{\Phi(b_j)}] 
= [b'_j \ast \mu_{d_j}^{-1} \ast \overline{b'_j}]
= \beta_j \mu^{-1} \beta_j^{-1} \in G(K), 
\]
where we slide $\mu_{d_j}^{-1}$, 
together with $b'_j$, 
on $\partial N(K)$, 
so that $\mu_{d_j}^{-1}$ becomes $\mu^{-1}$ and $b'_j$ becomes a loop representing $\beta_j$.  

Therefore we obtain a factorization
\[ \lambda \mu^{p-q} = (\alpha_1\mu \alpha_1^{-1}) \cdots (\alpha_p \mu \alpha_{p}^{-1}) (\beta_1\mu^{-1} \beta_1^{-1}) \cdots (\beta_q\mu^{-1}\beta_q^{-1}) \]
of the slope element $\lambda \mu^{p-q}$ as a product of $p$ conjugates of $\mu$ and $q$ conjugates of $\mu^{-1}$.
\end{proof}

\begin{figure}[htb]
\centering
\includegraphics[width=1.0\textwidth]{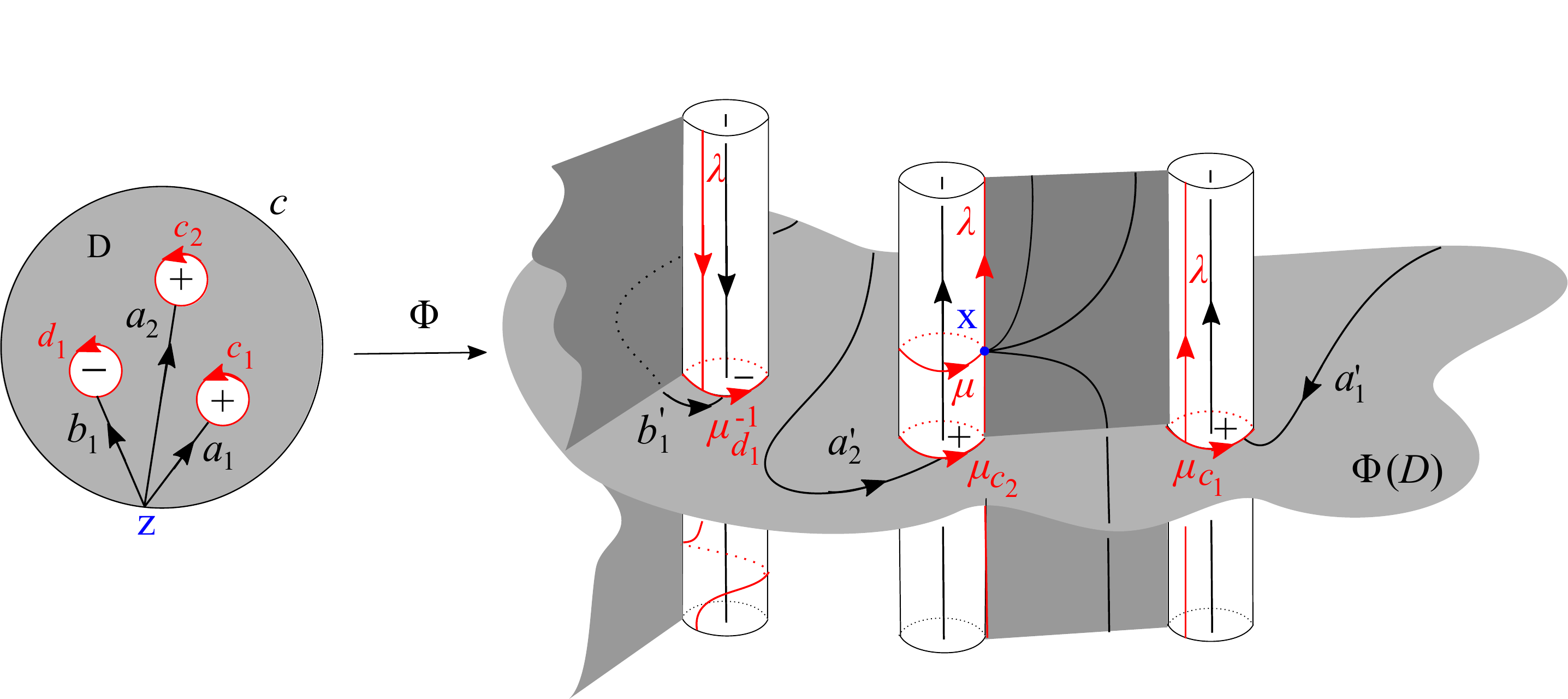}
\caption{The singular spanning disk gives rise to a factorization of a slope element $\lambda \mu^{p-q}$.}
\label{fig:singular_disk_2}
\end{figure}

\begin{proof}[Proof of Theorem \ref{theorem:g_torsion_span_disk}]
We prove the theorem for the case $K$ admits a $(p,0)$--singular spanning disk. 
The case where $K$ admits a $(0,q)$--singular spanning disk is similar.

By Lemma \ref{lemma:GW}, from a $(p,0)$--singular spanning disk, we get a factorization of $\lambda \mu^{p}$ as a product of $p$ conjugates of meridian $\mu$
\[ \lambda \mu^{p} = (\alpha_1\mu \alpha_1^{-1}) \cdots (\alpha_p\mu \alpha_{p}^{-1}) . \]
Therefore, 
\[ \lambda^{n}\mu^{m} = ( \lambda \mu^{p})^{n}\mu^{m - pn} 
= ((\alpha_1\mu \alpha_1^{-1}) \cdots (\alpha_p \mu \alpha_{p}^{-1}) )^{n} \mu^{m - pn}. \]
In $\pi_1(K(m/n))$, $\lambda^n\mu^m=1$.
Hence
\[((\alpha_1\mu \alpha_1^{-1}) \cdots (\alpha_p \mu \alpha_{p}^{-1}) )^{n} \mu^{m - pn} = 1, 
\]
and the meridian $\mu$ becomes a generalized torsion element of order $\le m$ in 
$\pi_1(K(m / n))$ if $m / n  \geq p$. 
Since $H_1(K(m/n))=\mathbb{Z}_m$ and $[\mu]$ is its generator,
$\mu$ is indeed a generalized torsion element of order $m$.
\end{proof}

\section{Dehn surgery along genus one two-bridge knot}

Let $g$ be a non-trivial element in a group $G$. 
Denote by $\langle \!\langle g \rangle \!\rangle^{+}$ the semigroup consisting of non-empty finite products of conjugates of $g$. 
Then a non-trivial element $g$ is a generalized torsion element if and only if $1 \in  \langle \!\langle g \rangle \!\rangle^{+}$. 

Since 
\[
y(\prod_{i = 1}^n(x_i g x_i^{-1}))y^{-1} = \prod_{i=1}^n (yx_i) g (yx_i)^{-1},
\]  
we have the following.
\smallskip

\noindent
$\bullet$ $a \in \langle \!\langle g \rangle \!\rangle^{+}$ implies that 
$\langle \!\langle a \rangle \!\rangle^{+} \subset \langle \!\langle g \rangle \!\rangle^{+}$. 

\smallskip

We collect some useful properties as a lemma below, 
which will be repeatedly used in the proof of Theorem \ref{theorem:g12bridge}. 
Recall that the commutator $g^{-1}h^{-1} g h$ is denoted by $[g, h]$. 

\begin{lemma}
\label{lemma:A}
Let $g, h, x$ be elements in $G$. 
Then the following holds. 
\begin{enumerate}
\item
$g^{n}h^{n}  \in  \langle \!\langle gh \rangle \!\rangle^{+}$ for all $n>0$.
\item
If $gh \in  \langle \!\langle x \rangle \!\rangle^{+}$, 
then $g^{n}h^{n}  \in  \langle \!\langle x \rangle \!\rangle^{+}$ for all $n>0$.
\item
If $[g,h] \in \langle \!\langle x \rangle \!\rangle^{+}$,  
then $[g^{n},h^{m}] \in \langle \!\langle x \rangle \!\rangle^{+}$ for all $n,m>0$.
\end{enumerate}
\end{lemma}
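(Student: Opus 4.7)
The plan is to treat the three parts in turn, exploiting throughout the observation displayed just before the lemma: if $a\in\langle\!\langle g\rangle\!\rangle^{+}$ then $\langle\!\langle a\rangle\!\rangle^{+}\subset\langle\!\langle g\rangle\!\rangle^{+}$.

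For (1), I would establish by induction on $n$ the telescoping identity
$$g^{n}h^{n}=\prod_{i=0}^{n-1}\bigl(h^{-i}(gh)h^{i}\bigr),$$
whose inductive step rests on the one-line calculation $(g^{k}h^{k})\bigl(h^{-k}(gh)h^{k}\bigr)=g^{k}\cdot gh\cdot h^{k}=g^{k+1}h^{k+1}$. This displays $g^{n}h^{n}$ explicitly as a product of $n$ conjugates of $gh$.

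Part (2) is then an immediate corollary: by (1), $g^{n}h^{n}\in\langle\!\langle gh\rangle\!\rangle^{+}$, and the hypothesis $gh\in\langle\!\langle x\rangle\!\rangle^{+}$ together with the observation above gives $\langle\!\langle gh\rangle\!\rangle^{+}\subset\langle\!\langle x\rangle\!\rangle^{+}$.

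For (3), I would invoke the standard commutator identities
$$[ab,c]=\bigl(b^{-1}[a,c]b\bigr)\cdot[b,c],\qquad [a,bc]=[a,c]\cdot\bigl(c^{-1}[a,b]c\bigr),$$
each verified by direct expansion. Taking $a=g^{k}$, $b=g$, $c=h$ in the first identity gives $[g^{k+1},h]=g^{-1}[g^{k},h]g\cdot[g,h]$, so induction on $k$ yields $[g^{n},h]\in\langle\!\langle[g,h]\rangle\!\rangle^{+}$. A second induction, using the second identity in the form $[g^{n},h^{\ell+1}]=[g^{n},h]\cdot h^{-1}[g^{n},h^{\ell}]h$, then places $[g^{n},h^{m}]$ in $\langle\!\langle[g,h]\rangle\!\rangle^{+}$. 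Combining with the hypothesis $[g,h]\in\langle\!\langle x\rangle\!\rangle^{+}$ and the observation above completes (3).

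I anticipate no genuine obstacle; the lemma reduces to routine group-theoretic bookkeeping. The only point requiring care is the choice of commutator identities whose right-hand sides involve conjugates of $[g,h]$ only, never its inverse, so that the inductive passage through the semigroup $\langle\!\langle\cdot\rangle\!\rangle^{+}$ remains valid.
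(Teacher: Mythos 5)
Your proof is correct. Parts (1) and (2) coincide with the paper's argument up to an immaterial choice: you conjugate $gh$ by powers of $h$, the paper by powers of $g$ (its identity is $g^{n}h^{n}=(g^{n-1}(gh)g^{-(n-1)})\cdots(gh)$); either telescoping works. For (3) you take a genuinely different, though equally routine, route. The paper avoids commutator identities altogether and instead applies part (2) twice: first to the splitting $[g,h]=g^{-1}\cdot(h^{-1}gh)$ to get $g^{-n}h^{-1}g^{n}h=(g^{-n}h^{-1}g^{n})\cdot h\in\langle\!\langle x\rangle\!\rangle^{+}$, then to that product to get $[g^{n},h^{m}]=(g^{-n}h^{-1}g^{n})^{m}h^{m}$. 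Your double induction via $[ab,c]=(b^{-1}[a,c]b)[b,c]$ and $[a,bc]=[a,c](c^{-1}[a,b]c)$ reaches the same conclusion; its merit is that it is self-contained standard commutator calculus, while the paper's version buys economy by reusing (2) and makes the semigroup bookkeeping automatic. You correctly identify the one point that genuinely requires care in either approach, namely that every factor produced is a conjugate of $[g,h]$ itself and never of its inverse, so membership in the semigroup $\langle\!\langle\,\cdot\,\rangle\!\rangle^{+}$ is preserved throughout.
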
 

\noindent
\begin{proof}
(1) follows from the equality: 
\[ g^{n}h^{n} =(g^{n-1} (gh) g^{-(n-1)})(g^{n-2} (gh) g^{-(n-2)})\cdots(gh).\]

(2) If $gh \in  \langle \!\langle x \rangle \!\rangle^{+}$, 
then $\langle \!\langle gh \rangle \!\rangle^{+} \subset  \langle \!\langle x \rangle \!\rangle^{+}$. 
Then (1) shows that $g^nh^m \in  \langle\!\langle gh \rangle \!\rangle^{+} \subset  \langle \!\langle x \rangle \!\rangle^{+}$. 

\medskip

(3) Assume that $[g, h] \in \langle \!\langle x \rangle \!\rangle^{+}$. 
Then $\langle \!\langle [g, h] \rangle \!\rangle^{+} 
= \langle \!\langle g^{-1}(h^{-1}gh) \rangle \!\rangle^{+} \subset \langle \!\langle x \rangle \!\rangle^{+}$. 
Apply (2) to see that 
$(g^{-n} h^{-1} g^{n}) h = g^{-n} (h^{-1}gh)^n \in \langle \!\langle x \rangle \!\rangle^{+}$. 
Then apply (2) again to see that $[g^n, h^m] = g^{-n} h^{-m} g^{n}h^{m} 
= (g^{-n} h^{-1} g^{n})^m h^{m} 
\in \langle \!\langle x \rangle \!\rangle^{+}$. 
\end{proof}

\begin{proof}[Proof of Theorem \ref{theorem:g12bridge}]
We take elements $a, b, t$ of $G(K)$ as indicated in Figure~\ref{g12bridge_generator}. 

\begin{figure}[htb]
\centering
\includegraphics[width=0.45\textwidth]{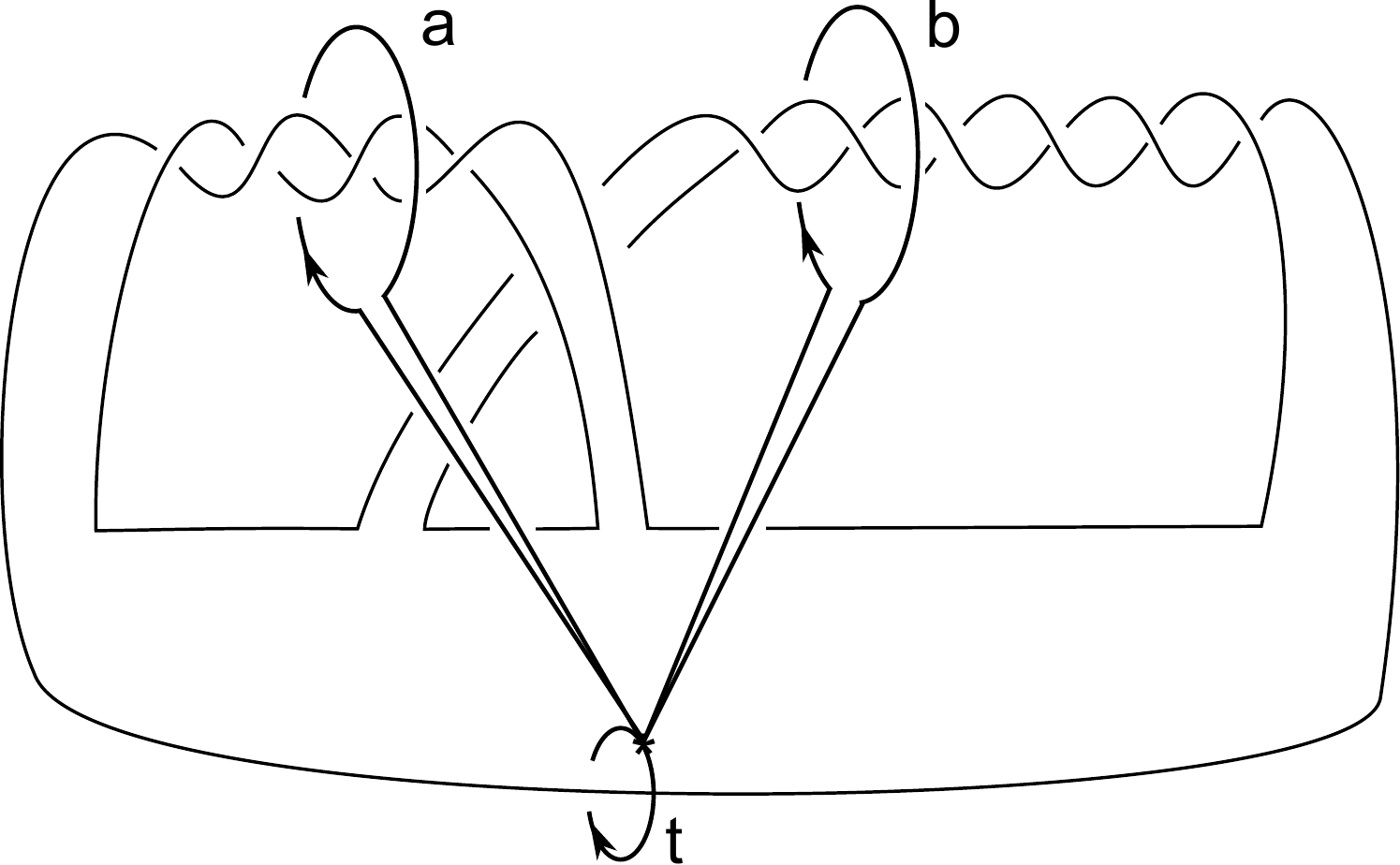}
\caption{$C[2p,2q]$ ($p=2, q=3$); $a,b,t$ are generator of the knot group $G(K)$.}
\label{g12bridge_generator}
\end{figure}

The knot group $G(K)$ of $K=C[2p,2q]$ has a presentation
\begin{equation}
\label{eqn:presentation-g12bridge}
G(K)=\langle a,b,t \mid ta^pt^{-1}=b^{-1}a^{p},\ tb^{-q}a^{-1}t^{-1}=b^{-q}\rangle.
\end{equation}
(This is the so-called Lin presentation which is
obtained by using the Seifert surface.  See \cite{Te}.)

Since the meridian and the longitude are given by $t$ 
and $[b^{q},a^{p}]$, respectively, 
$m/n$--surgery adds additional relation $t^m[b^q,a^p]^n=1$. 
Hence we have:
\[
\pi_1(K(m/n))=\langle a,b,t \mid ta^pt^{-1}=b^{-1}a^p,\ tb^{-q}a^{-1}t^{-1}=b^{-q},\ t^m[b^q,a^p]^n=1\rangle.
\]

From the first and second relations, 
we have $b=[a^{-p},t^{-1}]$ and $a=[t,b^{-q}]$.

\bigskip

First we prove (1). 
Assume that $n\ge 1$ and $m\ge (2n-1)p$. 
We show $a^p, a^{-p} \in \langle \! \langle t \rangle \! \rangle^{+}$. 
This then implies that $1 = a^p a^{-p} \in  \langle \! \langle t \rangle \! \rangle^{+}$, 
so $t$ is a generalized torsion element.

Since $ta = b^q t b^{-q} \in \langle \! \langle t \rangle \! \rangle^{+}$ (from the second relation), 
and $b^{q}tb^{-q}a^{-1} = t \in \langle \! \langle t \rangle \! \rangle^{+}$, 
Lemma \ref{lemma:A}(2) shows that 
\begin{equation}
\label{eqn:eqn-i1}
t^{p}a^{p},\ b^{q}t^{p}b^{-q}a^{-p} = (b^{q}t b^{-q})^p (a^{-1})^p 
\in \langle \! \langle t \rangle \! \rangle^{+}.
\end{equation}
Then
\begin{eqnarray*}
t^{2p}[b^{q},a^{p}] &=& t^{2p}b^{-q} (b^{q}t^{-p}b^{-q})(b^{q}t^{p}b^{-q})a^{-p} b^{q} t^{-p}t^{p}a^{p} \\
& =& (t^{p}b^{-q})(b^{q}t^{p}b^{-q}a^{-p})(t^{p}b^{-q})^{-1} \cdot (t^{p}a^{p}).
\end{eqnarray*}
Hence $ t^{2p}[b^{q},a^{p}] \in \langle \! \langle t \rangle \! \rangle^{+}$.
By Lemma \ref{lemma:A}(2), 
we conclude
\begin{equation}
\label{eqn:eqn-i2}
t^{2(n-1)p}[b^{q},a^{p}]^{n-1} \in \langle \! \langle t \rangle \! \rangle^{+}.
\end{equation}

\medskip

From the third relation for $\pi_1(K(m/n))$, 
we have
\[
t^m [b^q,a^p]^{n} 
=
t^m [b^q,a^p]^{n-1} [b^q,a^p] 
=
t^m[b^q,a^p]^{n-1}\cdot b^{-q}a^{-p}b^qa^p=1.
\]
This gives
\[
a^p=b^qa^pt^m[b^q,a^p]^{n-1}b^{-q}\quad \text{and} \quad 
a^{-p}=t^m[b^q,a^p]^{n-1}\cdot b^{-q}a^{-p}b^q.
\]
Then 
\begin{eqnarray*}
a^p &=& b^{q}\cdot t^{-p}(t^{p}a^{p}) t^{m} t^{-2(n-1)p} (t^{2(n-1)p}[b^{q},a^{p}]^{n-1})b^{-q}\\
&=& b^q \bigl(t^{-p}(t^{p}a^{p}) t^p \cdot t^{m-(2n-1)p}\cdot (t^{2(n-1)p}[b^{q},a^{p}]^{n-1}) \bigr)b^{-q},
\end{eqnarray*}
and
\begin{eqnarray*}
a^{-p}&=& t^m \cdot  t^{-2(n-1)p}(t^{2(n-1)p}[b^q,a^p]^{n-1})\cdot b^{-q} (b^{q}t^{-p}b^{-q})(b^{q}t^{p}b^{-q}a^{-p})b^{q}\\
&=& t^{m-2(n-1)p}(t^{2(n-1)p}[b^q,a^p]^{n-1}) t^{-m+2(n-1)p} \cdot t^{m-(2n-1)p}\cdot b^{-q} (b^{q}t^{p}b^{-q}a^{-p}) b^{q}.
\end{eqnarray*}
Since $m-(2n-1)p\geq 0$, by (\ref{eqn:eqn-i1}), (\ref{eqn:eqn-i2}) we conclude $a^{p},a^{-p} \in \langle \! \langle t \rangle \! \rangle^{+}$.

\bigskip

Next we prove (2). Assume that $n\ge 1$, $q>0$ and $m\le -(2n-1)q$.  we show $b^q, b^{-q} \in \langle \! \langle t \rangle \!\rangle^{+}$.

Since $bt = a^{p}ta^{-p} \in \langle \! \langle t \rangle \! \rangle^{+}$ (from the first relation) and 
$b^{-1}a^{p}ta^{-p} = t  \in \langle \! \langle t \rangle \! \rangle^{+}$, 
Lemma \ref{lemma:A}(2) shows that  
\begin{equation}
\label{eqn:ii-1}
b^{q}t^{q},\ b^{-q}a^{p}t^{q}a^{-p} = (b^{-1})^q (a^p t a^{-p})^q
\in \langle \! \langle t \rangle \! \rangle^{+}.\end{equation}
Then
\begin{eqnarray*}
t^{2q}[a^{p},b^{q}]& =& t^{2q}a^{-p}b^{-q}(a^{p}t^{q}a^{-p})(a^{p}t^{-q}a^{-p})a^{p}b^{q}t^{q}t^{-q}\\& =& t^{2q}a^{-p}(b^{-q}a^{p}t^{q}a^{-p})a^{p}t^{-2q} \cdot t^{q}(b^{q}t^{q})t^{-q}.
\end{eqnarray*}
Hence $t^{2q}[a^{p},b^{q}] \in \langle \! \langle t \rangle \! \rangle^{+}$. 
By Lemma \ref{lemma:A}(2) we get
\begin{equation}
\label{eqn:ii-2}
t^{2(n-1)q}[a^{p},b^{q}]^{n-1} \in \langle \! \langle t \rangle \! \rangle^{+}.
\end{equation} 

\medskip

From the third relation for $\pi_1(K(m/n))$, we have
\[
t^m [b^q,a^p]^{n} 
=
t^m  [b^q,a^p] [b^q,a^p]^{n-1}
=
t^m(b^{-q}a^{-p}b^qa^p)[b^q,a^p]^{n-1}=1.
\]
This gives
\[
b^q=a^pb^qt^{-m}[a^p,b^q]^{n-1}a^{-p}\quad and \quad 
b^{-q}=t^{-m}[a^p,b^q]^{n-1} a^{-p}b^{-q}a^p.
\]
Therefore
\begin{eqnarray*}
b^q&=& a^p (b^{q}t^{q}) t^{-q}t^{-m} t^{-2(n-1)q}(t^{2(n-1)q}[a^{p},b^{q}]^{n-1}) a^{-p}\\
&=& a^p \bigl( (b^{q}t^{q}) \cdot t^{-m-(2n-1)q}  \cdot (t^{2(n-1)q}[a^{p},b^{q}]^{n-1}) \bigr) a^{-p},
\end{eqnarray*}
and
\begin{eqnarray*}
b^{-q}&= & t^{-m}t^{-2(n-1)q}t^{2(n-1)q}[a^{p},b^{q}]^{n-1}a^{-p}b^{-q}(a^{p}t^{q}a^{-p})(a^{p}t^{-q}a^{-p})a^{p}\\
&=&  t^{-m-2(n-1)q}(t^{2(n-1)q}[a^{p},b^{q}]^{n-1}) t^{-q}t^{q}a^{-p}(b^{-q}a^{p}t^{q}a^{-p}) a^{p}t^{-q}\\
&=&  t^{-m-(2n-1)q} \cdot t^{q}(t^{2(n-1)q}[a^{p},b^{q}]^{n-1}) t^{-q} \cdot   t^{q}a^{-p}(b^{-q}a^{p}t^{q}a^{-p}) a^{p}t^{-q}.
\end{eqnarray*}
Since $-m-(2n-1)q\geq 0$, by (\ref{eqn:ii-1}), (\ref{eqn:ii-2}) we conclude that $b^{q},b^{-q} \in \langle \! \langle t \rangle \! \rangle^{+}$.

\bigskip

Finally we prove (3).
 Assume that $n\ge 1$, $q<0$ and $m\ge -(2n-1)q$. 
 We show  $b^q, b^{-q} \in \langle\!\langle t \rangle \!\rangle^{+}$.
As in (ii),  $bt=a^pta^{-p}\in \langle \! \langle t \rangle \!\rangle^{+}$ and
$b^{-1}a^pta^{-p} = t \in \langle \! \langle t \rangle \!\rangle^{+}$ imply that
\[
b^{-q}t^{-q},\ 
b^qa^pt^{-q}a^{-p}=(b^{-1})^{-q}(a^pt a^{-p})^{-q}\in \langle \! \langle t \rangle \!\rangle^{+}.
\]
Then
\begin{align*}
[b^q,a^p]t^{-2q}&= b^{-q}a^{-p}(b^qa^pt^{-q}a^{-p})a^pt^{-q}\\
&= (b^{-q}t^{-q})\cdot t^{q}a^{-p}(b^qa^pt^{-q}a^{-p})a^pt^{-q} \in \langle \! \langle t \rangle \!\rangle^{+}.
\end{align*}
Lemma \ref{lemma:A}(2) gives
\[
[b^q,a^p]^{n-1}t^{-2(n-1)q}\in \langle \! \langle t \rangle \!\rangle^{+}.
\]

From the third relation for $\pi_1(K(m/n))$, we have
\[
t^m [b^q, a^p]^{n}
=
t^m [b^q, a^p] [b^q, a^p]^{n-1}
=
t^m(b^{-q} a^{-p} b^q a^p) [b^q, a^p]^{n-1} = 1.
\]
This gives
\[
b^q=a^{-p}b^qa^p[b^q,a^p]^{n-1}t^m \quad and \quad b^{-q}=a^p[b^q,a^p]^{n-1}t^mb^{-q}a^{-p}.
\]
Therefore 
\begin{eqnarray*}
b^q&=& a^{-p}b^{q}a^{p}t^{-q}a^{-p}a^{p}t^{q} [b^q,a^p]^{n-1}t^{-2(n-1)q}t^{2(n-1)q}t^m\\
& =& a^{-p}(b^{q}a^{p}t^{-q}a^{-p})a^{p} \cdot t^{q} ([b^q,a^p]^{n-1}t^{-2(n-1)q}) t^{-q} \cdot t^{(2n-1)q+m},
\end{eqnarray*}
and
\begin{eqnarray*}
b^{-q}&=&  a^p( [b^{q},a^{q}]^{n-1}t^{-2(n-1)q})t^{2(n-1)q}t^m b^{-q}a^{-p}\\
&=&  a^p \bigl( [b^{q},a^{q}]^{n-1}t^{-2(n-1)q} \cdot t^{(2n-1)q+m}\cdot t^{-q}(b^{-q}t^{-q})t^q \bigr) a^{-p}.
\end{eqnarray*}
Since $m+(2n-1)q\geq 0$, 
we conclude that $b^{q},b^{-q} \in \langle \! \langle t \rangle \! \rangle^{+}$.
\end{proof}

\begin{remark}
The Alexander polynomial of $K=C[2p,2q]$ ($p > 0$) is 
$\Delta_{K}(t)=-(pq)t +(2pq-1) -(pq)t^{-1}$. 

If $q>0$, then all the roots of the $\Delta_{K}(t)$ are positive real. 
Known bi-orderability criterion \cite{PR,PR2,CGW} says that under additional assumption (such as fiberedness),  
the knot group $G(K)$ is bi-orderable if all the roots of the Alexander polynomial $\Delta_{K}(t)$ are positive real. In a light of this, 
we expect $G(K)$ is bi-orderable, 
although this is confirmed only for $p=1$ at present. 

On the other hand, 
if $q<0$, then $\Delta_{K}(t)$ has no positive real roots. By \cite[Theorem 3.3]{CDN} or \cite{Ito2}, 
this shows that $G(K)$ is not bi-orderable. 
In a light of Conjecture \ref{conj:bo}, $G(K)$ would have a generalized torsion element, 
but this is confirmed only for $p=1$ again.
\end{remark}

\section{Further examples}
\label{examples}

\subsection{Generalized Whitehead doubles}

Let us take a standardly embedded solid torus $V$ in $S^3$ and a knot $k_{\omega}^{\tau}$ in $V$
as depicted in Figure  \ref{fig:generalized_Whitehead} (Left).
Let $f \colon V \to S^3$ be an orientation preserving embedding such that 
the core of $f(V)$ is a non-trivial knot $k \subset S^3$ and $f$ sends a preferred longitude of $V$ to that of $k$. 
Then the image $f(k_{\omega}^{\tau})$ is called a \textit{$\tau$--twisted, $\omega$--generalized Whitehead double of $k$}; 
see Figure~\ref{fig:generalized_Whitehead}. 
If $\omega = -1$, then it is a usual positive Whitehead double of $k$. 

\begin{figure}[htb]
\centering
\includegraphics[width=0.8\textwidth]{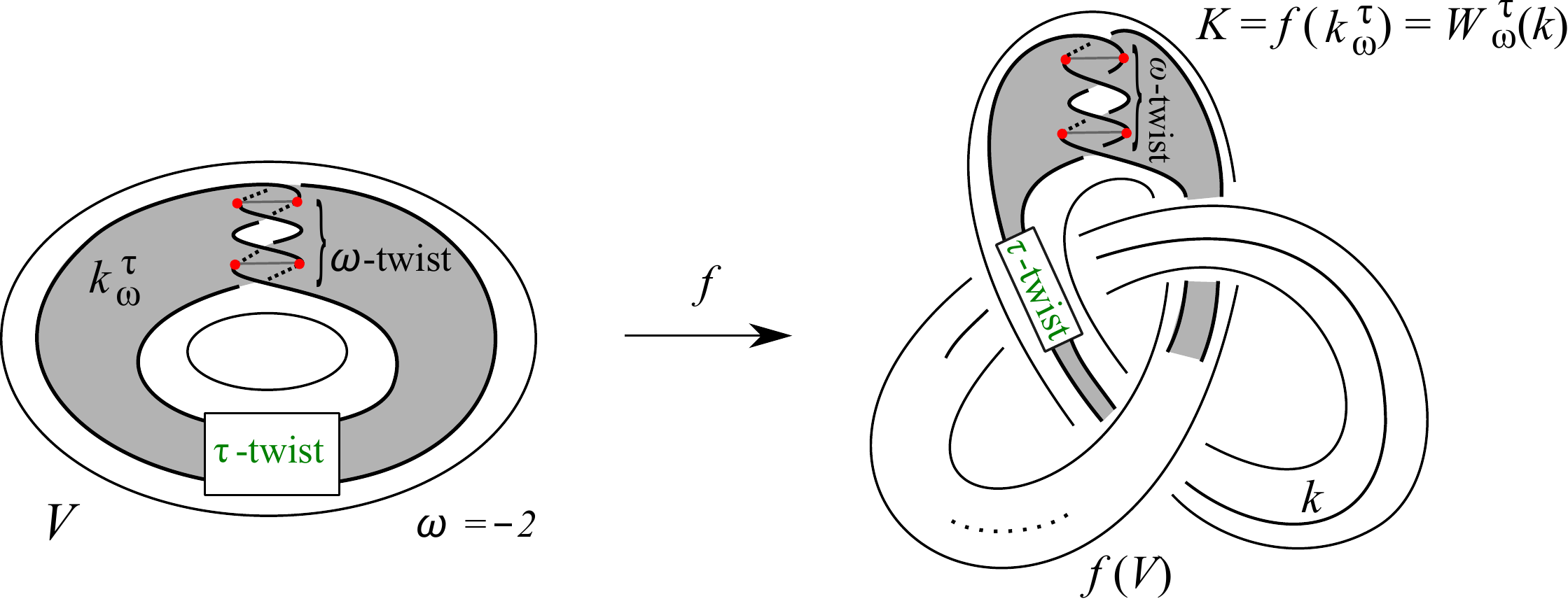}
\caption{$f$ is a faithful embedding of $V$ into $S^3$ which sends the longitude of $V$ to that of $k$.}
\label{fig:generalized_Whitehead}
\end{figure}

\begin{corollary}
Let $K$ be a $\tau$--twisted, $\omega$--generalized Whitehead double of a knot $k$ 
$(\omega < 0)$. 
Then $\pi_1(K(r))$ has a generalized torsion element whenever $r\geq 2\omega$. 
\end{corollary}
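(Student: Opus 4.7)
The plan is to exhibit a singular spanning disk for the satellite $K = f(k_\omega^\tau)$ and then invoke Theorem~\ref{theorem:g_torsion_span_disk} directly.

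First, I would work inside the pattern solid torus $V$ to produce an immersed disk $\Delta$ bounded by $k_\omega^\tau$. From Figure~\ref{fig:generalized_Whitehead}, the two parallel strands of $k_\omega^\tau$ span a natural ``ribbon'' disk in $V$, and the $|\omega|$ clasps at one end convert into transverse interior intersections of $k_\omega^\tau$ with $\mathrm{int}(\Delta)$. Because $\omega<0$, all of these clasps carry the same sign, so $\Delta$ is a $(p,0)$-singular disk in $V$ for the value of $p$ dictated by $|\omega|$ and the orientation convention of the picture. The twist parameter $\tau$ acts on $V$ by a diffeomorphism that adds full twists along the longitude; it preserves both the number and the signs of the interior intersections, although it may shift the framing used to measure slopes.

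Second, I push $\Delta$ forward via the embedding $f\colon V\hookrightarrow S^3$. Since $f$ is an orientation-preserving embedding, $f(\Delta)$ is an immersed disk with $\partial f(\Delta)=K$, and $K$ meets $\mathrm{int}(f(\Delta))$ transversely in exactly the same signed pattern as $k_\omega^\tau$ met $\mathrm{int}(\Delta)$ in $V$. Thus $f(\Delta)$ is a $(p,0)$-singular spanning disk of $K$ in $S^3$, and applying Theorem~\ref{theorem:g_torsion_span_disk} to it shows that the image of a meridian of $K$ in $\pi_1(K(r))$ is a generalized torsion element for the stated range of slopes.

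The main obstacle is the first step: reading off the correct signed count $p$ from the generalized Whitehead pattern and matching the convention so as to recover the precise inequality $r\ge 2\omega$ in the statement, together with any framing correction produced by the $\tau$-twisting (since $f$ is specified to send the longitude of $V$ to that of $k$, the relevant slope element on $\partial N(K)$ can still be identified with a power of $\mu$ times the longitude, but care is needed). Once the intersection data $(p,0)$ and the slope element are pinned down, the conclusion is automatic from Theorem~\ref{theorem:g_torsion_span_disk}.
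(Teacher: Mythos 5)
Your proposal is correct and takes essentially the same route as the paper: the paper's entire proof is to observe from Figure~\ref{fig:generalized_Whitehead} that $K$ bounds a $(2\omega,0)$--singular spanning disk (in the paper's sign convention, i.e.\ the clasp disk coming from the $|\omega|$ like-signed clasps of the pattern, pushed forward by $f$) and then to apply Theorem~\ref{theorem:g_torsion_span_disk}. The two residual worries you flag are not real obstacles: the count $p$ is just twice the number of clasps read off the figure, and no framing correction from $\tau$ is needed because the pattern has winding number zero and Lemma~\ref{lemma:GW} already identifies the boundary of the shrunk disk with the slope element $\lambda\mu^{p-q}$ regardless of the twisting.
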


\noindent
\begin{proof}
As shown in Figure~\ref{fig:generalized_Whitehead}, 
$K$ bounds a $(2\omega,0)$--singular spanning disk. 
Apply Theorem~\ref{theorem:g_torsion_span_disk} to obtain the desired result. 
\end{proof}

\medskip

\subsection{Montesinos knots}

A \textit{tangle} $R = (B, t)$ is a pair of a $3$--ball $B$ (which is the unit $3$--ball in $\mathbb{R}^3$) 
and two disjoint arcs $t$ properly embedded in $B$. 
We say that a tangle $(B, t)$ is \textit{trivial\/} if there is a pairwise
homeomorphism from $(B, t)$ to $(D^2 \times I, \{x_1, x_2\}\times I)$, 
where $x_1, x_2$ are distinct points.
Two tangles $(B, t)$ and $(B, t')$ with $\partial t = \partial t'$ are \textit{equivalent\/} 
if there is a pairwise homeomorphism $h : (B, t) \to (B, t')$ which is the identity on $\partial B$. 

Take 4 points NW, NE, SE, SW on the boundary of $B$
so that
$\mathrm{NW} = (0,-\alpha, \alpha),
\mathrm{NE} = (0, \alpha, \alpha),
\mathrm{SE} = (0, \alpha, -\alpha),
\mathrm{SW} = (0, -\alpha, -\alpha)$,
where $\alpha =\frac{1}{\sqrt{2}}$.
A tangle $(B, t)$ ($\partial t = \{\mathrm{NW, NE, SE, SW}\}$) is \textit{rational\/} 
if it is a trivial tangle. 
Any rational tangle can be constructed from a sequence of integers 
$a_1, a_2, \dots, a_n$ as shown in Figure~\ref{rational_tangle}, 
where only the last horizontal twist $a_n$ may be $0$.
We denote the resulting tangle by $[a_1, a_2, \dots, a_n]$. 
We say that a rational tangle denoted by $[a_1, a_2, \dots, a_n]$ is \textit{odd type\/} (resp. \textit{even type\/}) if $n$ is odd (resp. even).

A \textit{Montesinos knot} $M(R_1, \dots, R_m)$  is a knot which has a diagram 
in Figure~\ref{Montesinos_knot} (Top-left), 
where $R_i$ is a rational tangle 
$[a_{i, 1}, a_{i, 2}, \dots, a_{i, n_i}]$. 

\begin{figure}[htb]
\centering
\includegraphics[width=0.9\textwidth]{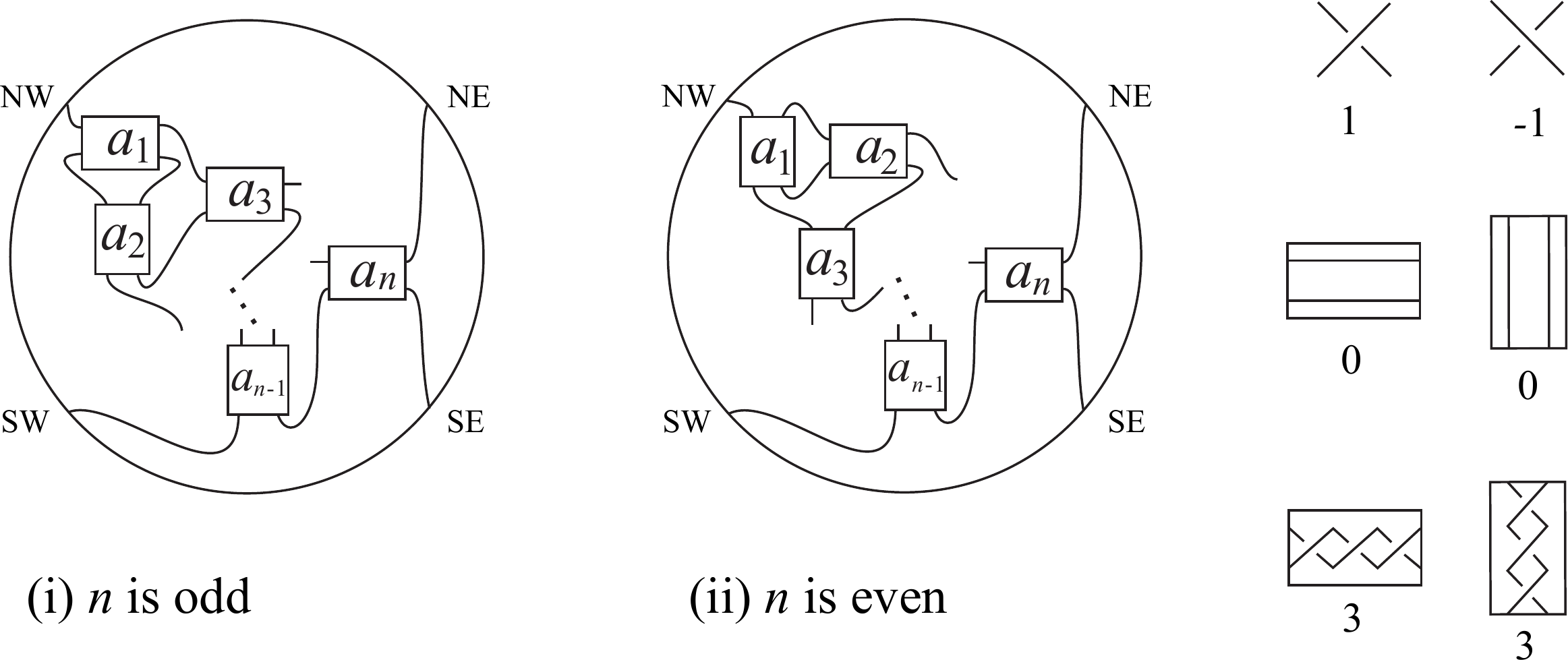}
\caption{Rational tangle $[a_1, a_2, \dots, a_n]$}
\label{rational_tangle}
\end{figure}

\begin{figure}[htb]
\centering
\includegraphics[width=0.8\textwidth]{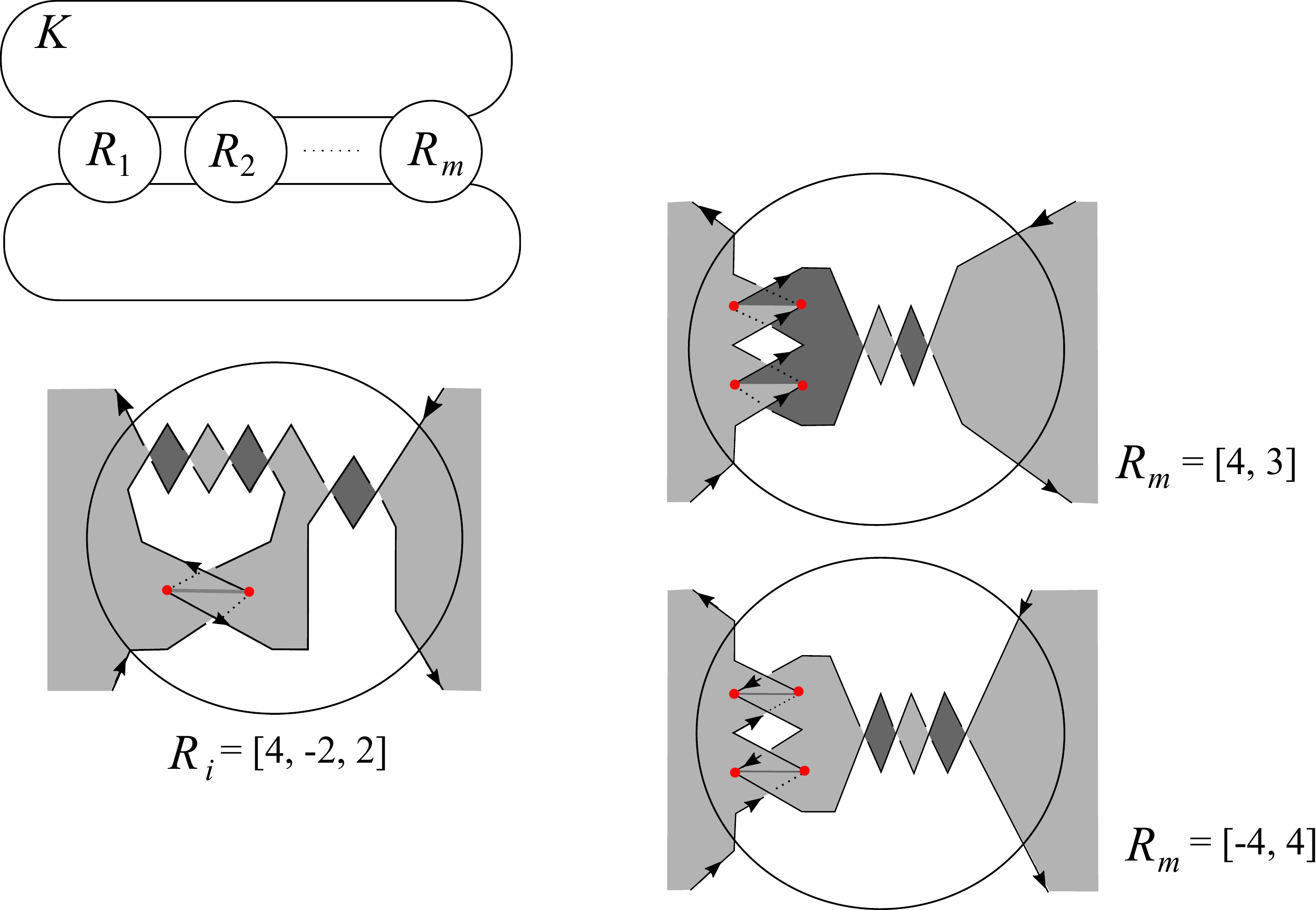}
\caption{Montesinos knot with $(p, 0)$--singular spanning disk}
\label{Montesinos_knot}
\end{figure}

We say that a Montesinos knot $K = M(R_1, \dots, R_m)$ satisfies the Condition $(*)$ if 
\begin{enumerate}
\item
$R_1, \cdots, R_{m-1}$ are odd type rational tangles and $R_m$ is an even type rational tangle, 
\item
for each odd type rational tangle $R_i = [a_{i, 1}, a_{i, 2}, \dots, a_{i, n_i}]$ $(1 \le i \le m-1)$, 
$a_{i, j}$ is even and $a_{i, \mathrm{even}} < 0$ (Bottom-left of Figure~\ref{Montesinos_knot}), and 
\item
for the even type rational tangle $R_m = [a_{m, 1}, a_{m, 2}, \dots, a_{m, n_m}]$, 
$a_{m, \mathrm{odd}}$ is positive even and $a_{m, \mathrm{even}}$ is odd (Top-right of Figure~\ref{Montesinos_knot}), or 
$a_{m, \mathrm{odd}}$ is negative even and $a_{m, \mathrm{even}}$ is even (Bottom-right of Figure~\ref{Montesinos_knot}).
\end{enumerate}

Put 
\[c(K) =  c(M(R_1, \dots, R_m)) 
=  \sum_{1 \le i \le m-1} a_{i, \mathrm{even}} + \sum |a_{m, \mathrm{odd}}|
\]

\begin{corollary}
Let $K = M(R_1, \dots, R_m)$ be a Montesinos knot which satisfies the Condition $(*)$. 
Then $\pi_1(K(r))$ has a generalized torsion whenever $r \geq c(K)$.
\end{corollary}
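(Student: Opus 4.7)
The plan is to reduce the statement to Theorem~\ref{theorem:g_torsion_span_disk}: it suffices to exhibit a $(c(K),0)$--singular spanning disk $\Phi\colon D^{2}\to S^{3}$ for $K$, for then the image of the meridian $\mu$ in $\pi_{1}(K(r))$ is a generalized torsion element whenever $r\ge c(K)$.

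The construction of such a disk is indicated by Figure~\ref{Montesinos_knot} and proceeds tangle by tangle. First I would start from an ``obvious'' disk spanning the Montesinos diagram, obtained by taking a disk whose boundary runs along $K$ and whose interior is a union of twisted bands glued along the horizontal NW--NE--SE--SW axis of each rational tangle box. Inside each odd type tangle $R_{i}=[a_{i,1},\dots,a_{i,n_{i}}]$ with $1\le i\le m-1$, Condition~$(*)(2)$ forces every $a_{i,j}$ to be even and every $a_{i,\mathrm{even}}$ to be negative, so the twists indexed by odd $j$ can be absorbed into full twists of a band while the twists indexed by even $j$ can be replaced by clasp-type singular intersections of the disk with $K$; with the prescribed sign, each clasp contributes exactly $|a_{i,\mathrm{even}}|$ transverse intersections of a single (positive) sign. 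For the terminal even type tangle $R_{m}$, Condition~$(*)(3)$ dictates which entries become band twists and which become positive clasps: the two alternative sign patterns produce the two local pictures on the right of Figure~\ref{Montesinos_knot}, and either way the resulting $|a_{m,\mathrm{odd}}|$ intersections all have the same (positive) sign. Gluing these local disks along the tangle boundaries yields a singular spanning disk for $K$.

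Counting the intersections tangle by tangle and matching the sign conventions of Condition~$(*)$ yields exactly $c(K)$ positive intersections and no negative ones, so $K$ bounds a $(c(K),0)$--singular spanning disk. Theorem~\ref{theorem:g_torsion_span_disk} then gives the conclusion.

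The main obstacle will be the sign bookkeeping: for each tangle one must verify that the local replacement of twists by clasps produces only positively oriented intersections and that the running total agrees with $c(K)$. Condition~$(*)$ is precisely the package of parity and sign hypotheses that makes this sign control uniform across all tangles, so once the local pictures in Figure~\ref{Montesinos_knot} are checked carefully, the global argument is a straightforward assembly.
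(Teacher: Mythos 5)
Your proposal is correct and follows essentially the same route as the paper: the paper's proof consists precisely of observing, via Figure~\ref{Montesinos_knot}, that Condition $(*)$ lets one build a $(c(K),0)$--coherent clasp disk for $K$ and then invoking Theorem~\ref{theorem:g_torsion_span_disk}. Your tangle-by-tangle description of absorbing the even-entry twists into bands versus converting the remaining twists into same-sign clasps is exactly the sign bookkeeping the figure encodes.
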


\noindent

\noindent
\begin{proof}\ 
Following Figure~\ref{Montesinos_knot}, 
we see that $K$ bounds a $(c(K),0)$--coherent clasp disk. 
\end{proof}

\medskip

\subsection{Positive knots, almost positive knots and thier slight generalization}

A knot is said to be \textit{positive\/} (resp. \textit{almost positive\/})
if it admits a diagram whose crossings are all positive (resp. all positive except one).
More generally, we can handle
a knot which admits a diagram whose negative crossings appear successively along a single overarc
as in Figure~\ref{negative_positive}. 
If $k=0$ (resp. $1$), the diagram $D$ is a positive  (resp. almost positive) diagram.

 \begin{figure}[htb]
\centering
\includegraphics[width=0.4\textwidth]{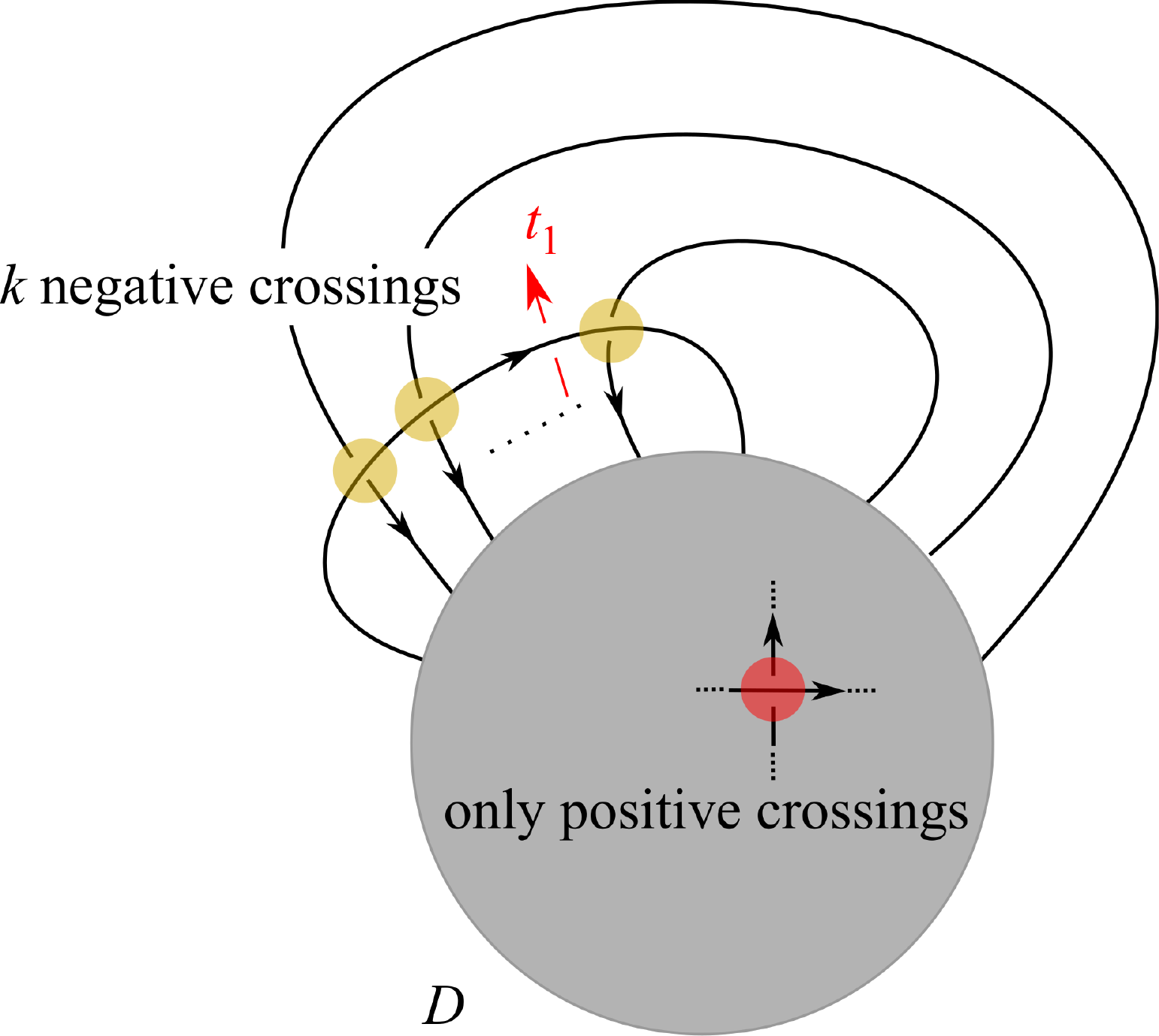}
\caption{Diagram $D$ has successive $k$ negative crossings and $p-k$ positive crossings.}
\label{negative_positive}
\end{figure}

\begin{theorem}\label{thm:k-positive}
Let $K$ be a knot which admits a diagram $D$ with $p$ crossings and $k$ negative
crossings that appear successively along an overarc.
If $r\ge p-k$, then $\pi_1(K(r))$ has a generalized torsion element.
\end{theorem}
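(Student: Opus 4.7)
The plan is to reduce to Theorem~\ref{theorem:g_torsion_span_disk}: it suffices to produce a $(p-k, 0)$--singular spanning disk of $K$, since then the image of $\mu$ in $\pi_1(K(r))$ is a generalized torsion element of order $r$ whenever $r\ge p-k$.

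The construction of this singular disk is the geometric heart of the argument, and exploits the special shape of $D$ in Figure~\ref{negative_positive}: the $k$ negative crossings are bunched consecutively along a single overarc $\alpha$. My idea is first to isotope $\alpha$ in $S^3$ to lie above the rest of the projection, thereby replacing $D$ by an effectively positive diagram $D'$ of the same knot $K$ in which the $p-k$ original positive crossings persist and $\alpha$ appears as a trivial overarc sitting above the remainder of the picture. From such a positive-type diagram a singular spanning disk is built in the coherent clasp spirit of the examples of Section~\ref{examples}: each remaining (positive) crossing contributes one positive transverse intersection of $K$ with the interior of the disk, and the relocated $\alpha$ only meets the disk along its boundary, introducing no new intersections. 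This yields the required $(p-k, 0)$--singular spanning disk, after which Theorem~\ref{theorem:g_torsion_span_disk} completes the proof.

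The main obstacle I expect is making the construction in the previous paragraph precise: checking at each positive crossing of $D'$ that the local picture really does contribute exactly one positive transverse intersection, and verifying that routing $\alpha$ above the diagram does not secretly introduce stray negative intersections. A cleaner alternative that avoids drawing the disk explicitly would be to work algebraically from the Wirtinger presentation associated to $D$: the preferred longitude $\lambda$ is a product of $p$ conjugates of $\mu^{\pm 1}$, one per crossing and signed by the sign of the crossing; the hypothesis that the $k$ negative contributions all share the same overstrand meridian $\mu_\alpha$ should allow one to rewrite $\lambda\mu^{p-k}$ as a product of exactly $p-k$ conjugates of $\mu$, after which the slope-element manipulation used in the proof of Theorem~\ref{theorem:g_torsion_span_disk} concludes.
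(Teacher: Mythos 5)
Your ``cleaner alternative'' is in fact the paper's proof, essentially verbatim in outline: the authors write the Wirtinger longitude as $\lambda=t_1^{-(p-2k)}\,U_1t_1^{-1}U_2t_1^{-1}\cdots U_kt_1^{-1}V$, where $t_1$ is the meridian of the overarc carrying the $k$ negative crossings and $U_i,V$ are products of (positive) conjugates of $t_1$; because the $k$ negative letters are the \emph{literal} generator $t_1^{-1}$ (not conjugates by nontrivial elements), they can be shuffled to the front at the cost of conjugating the $U_i$, giving $\lambda=t_1^{-(p-k)}W$ with $W$ a product of $p-k$ conjugates of $t_1$. Substituting into $t_1^m\lambda^n=1$ yields $t_1^{m-n(p-k)}$ times a product of conjugates of $t_1$, which exhibits $t_1$ as a generalized torsion element when $m-n(p-k)\ge 0$. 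So that route is correct and complete once written out; the only point you should make explicit is the one just noted, that the negative contributions appear unconjugated, since that is what lets them be absorbed into the framing power of $\mu$.

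Your primary geometric route, however, has a real gap. Isotoping the overarc $\alpha$ ``above'' the rest of the projection does not produce a positive diagram with $p-k$ crossings: $\alpha$ is already the overstrand at those $k$ crossings, so lifting it changes nothing, and actually removing crossings would require Reidemeister moves that need not preserve positivity of the remaining crossings. In the cone construction of Section~\ref{examples}, every crossing (not every overarc) contributes one transverse intersection of $K$ with the disk, signed by the crossing sign, so the naive cone over $D$ gives a $(p-k,k)$--singular spanning disk, not a $(p-k,0)$ one; eliminating the $k$ negative punctures is exactly the unproved step. This is presumably why the paper uses the cone disk only in the purely positive case $k=0$ (the Remark after Corollary~\ref{positive knot}) and proves Theorem~\ref{thm:k-positive} algebraically. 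I would drop Route A, or at least demote it to a remark, and present Route B as the proof.
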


\noindent
\begin{proof}\ 
In $D$, 
assign the meridian generators $t_1,t_2,\dots, t_d$ of $G(K)$ for the overarcs along the knot.
Here, we choose $t_1$  for the overarc running over $k$ successive negative crossings.
Note that any $t_i\ (i\ne 1)$ is a conjugate of $t_1$ in $G(K)$.
Then if we traverse  the longitude $\lambda$ from the overarc running over the negative crossings,
 then
\[
\lambda=t_1^{-(p-2k)}\cdot t_{i(1)} \cdots t_{i(s_1)}^{-1} \cdots t_{i(s_2)}^{-1} 
\cdots t_{i(s_k)}^{-1} \cdots t_{i(d)},
\]
where $i(s_j)=1\ (j=1,2,\dots, k)$ and the others are not equal to $1$.
We should remark that the writhe of $D$ is $p-2k$. 

We may rewrite 
\begin{align*}
\lambda& =t_1^{-(p-2k)} \cdot U_1 t_1^{-1} U_2  t_1^{-1} \cdots U_k t_1^{-1} \cdot V \\
&= t_1^{-(p-k)} (t_1^kU_1 t_1^{-k}) (t_1^{k-1}U_2 t_1^{-(k-1)}) \cdots (t_1^2U_{k-1}t_1^{-2})(t_1U_k t_1^{-1})\cdot V\\
&=t_1^{-(p-k)}W,
\end{align*}
where $U_i$, $V$ and $W$ are products of conjugates of $t_1$.

In $\pi_1(K(m/n))$, 
the surgery relation is $t_1^m\lambda^n=1$.
Since
\begin{align*}
t_1^m\lambda^n & = t_1^m (t_1^{-(p-k)}W)^n\\
&=t_1^m \cdot t_1^{-n(p-k)}(t_1^{(n-1)(p-k)}Wt_1^{-(n-1)(p-k)})\cdots \\
&\qquad (t_1^{2(p-k)}W t_1^{-2(p-k)})(t_1^{p-k} W t_1^{-(p-k)}) W \\
& = t_1^{m-n(p-k)}(t_1^{(n-1)(p-k)}Wt_1^{-(n-1)(p-k)})\cdots \\
&\qquad (t_1^{2(p-k)}W t_1^{-2(p-k)})(t_1^{p-k} W t_1^{-(p-k)}) W, 
\end{align*}
the meridian $t_1$ gives a generalized torsion element if $m-n(p-k)\ge 0$.
\end{proof}

The special case where $k=0$ or $1$ of Theorem \ref{thm:k-positive}
immediately gives the following.

\begin{corollary}
\label{positive knot}
Let $K$ be a positive \(resp. an almost positive\) knot in $S^3$ with positive \(resp. almost positive\) diagram $D$. 
Let $p$ be the number of crossings of $D$. 
Then  $\pi_1(K(r))$ admits a generalized torsion element whenever $r \ge p$ (resp.  $r \ge p-1$). 
\end{corollary}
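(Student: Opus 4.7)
The plan is to derive this corollary directly from Theorem \ref{thm:k-positive} by specialization, since the theorem is stated precisely to include both the positive and almost positive cases as marginal instances. The only substantive task is to check that the hypothesis on "$k$ negative crossings appearing successively along a single overarc" is vacuously or trivially satisfied in each case.

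First I would handle the positive case. Let $K$ be a positive knot presented by a positive diagram $D$ with $p$ crossings. Then there are no negative crossings, so we set $k=0$. The condition that the negative crossings appear consecutively along a single overarc is vacuous, so the hypotheses of Theorem \ref{thm:k-positive} are met. The theorem then gives a generalized torsion element in $\pi_1(K(r))$ whenever $r \ge p - k = p$, which is the desired bound.

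Next I would handle the almost positive case. Let $K$ be almost positive with almost positive diagram $D$ having $p$ crossings; by definition all crossings of $D$ are positive except one, so $k = 1$. A single negative crossing is automatically traversed by a well-defined overarc in $D$, and the condition of appearing "successively along an overarc" holds trivially (there is nothing to sequence). Applying Theorem \ref{thm:k-positive} with $k = 1$ yields a generalized torsion element in $\pi_1(K(r))$ whenever $r \ge p - 1$, as claimed.

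There is essentially no obstacle here; the main point is simply to recognize that the statement is the $k=0$ and $k=1$ specialization of the more general theorem already established. One could alternatively write out a one-line proof observing that the hypotheses of Theorem \ref{thm:k-positive} reduce in these two cases exactly to the positivity and almost positivity conditions, and that the bounds $r \ge p$ and $r \ge p - 1$ are what $r \ge p - k$ becomes.
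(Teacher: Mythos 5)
Your proposal is correct and matches the paper exactly: the authors also obtain this corollary as the $k=0$ and $k=1$ specializations of Theorem~\ref{thm:k-positive}, noting that the "successive negative crossings along an overarc" hypothesis is vacuous or automatic in these cases.
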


\begin{remark}
For positive knots, we have another way to give the same result.
Suppose that $K$ is embedded in the boundary of a $3$--ball $B^3$ except $p$ over arcs. 
Let $\infty$ is the center of $B^3$ and take a singular disk $D$ with center $P$ and $\partial D = K$. 
Since $K$ is positive, $D$ is a $(p, 0)$--singular spanning disk and Theorem~\ref{theorem:g_torsion_span_disk} 
gives the desired conclusion. 
We should remark that this singular disk is not a clasp disk, because it  has a branch point $P$.
\end{remark}

\begin{figure}[htb]
\centering
\includegraphics[width=0.4\textwidth]{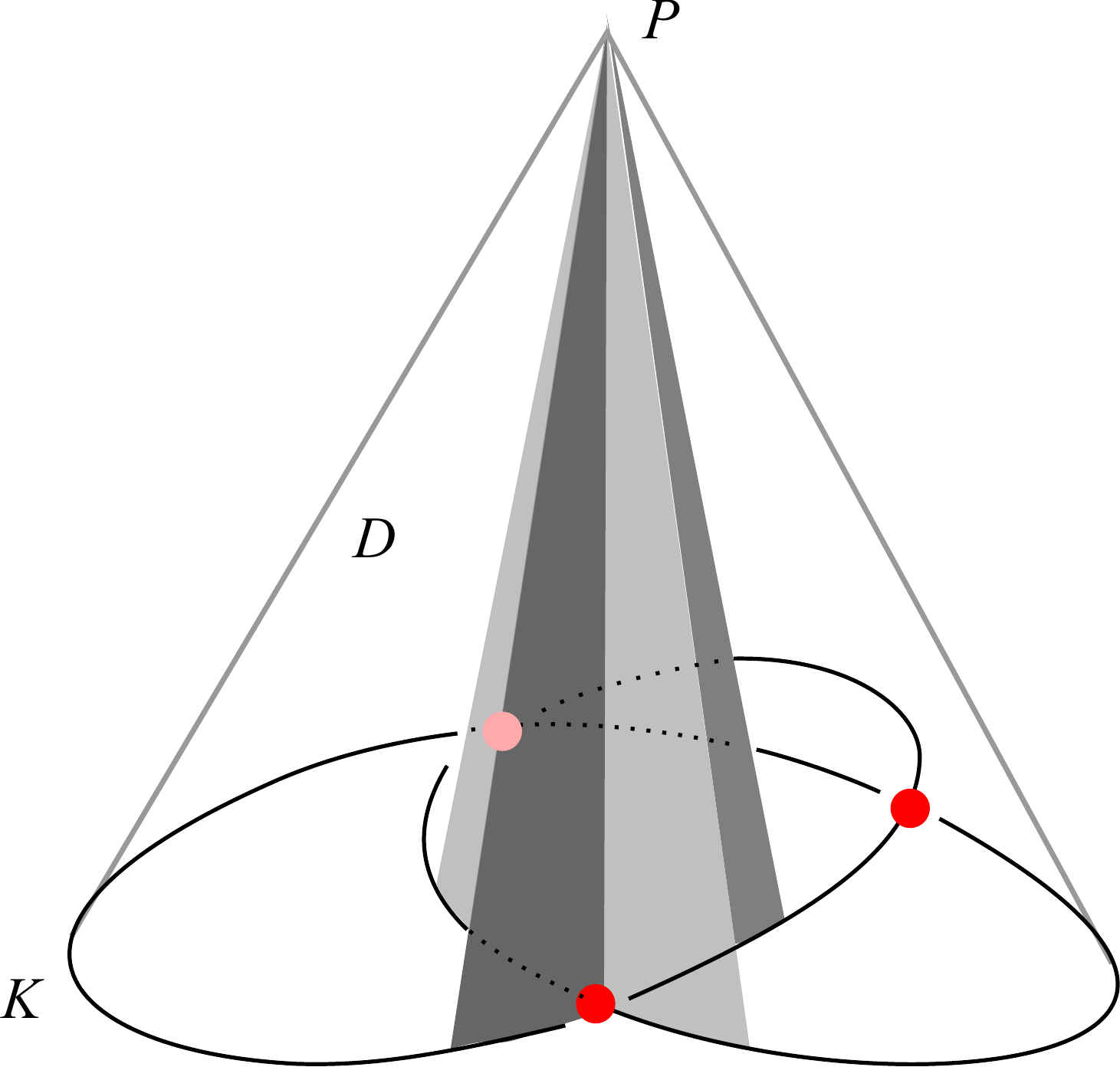}
\caption{Cone of positive diagram yields a $(p, 0)$--singular spanning disk of $K$}
\label{cone}
\end{figure}

\section{Questions}

As we mentioned in Section~\ref{introduction}, 
there are two kinds of generalized torsion elements of $\pi_1(K(r))$:  
\begin{itemize}
\item
A generalized torsion element which is the image of a generalized torsion element of $G(K)$
\item
A generalized torsion element which is the image of a non-generalized torsion element of $G(K)$
\end{itemize}

For the first kind of generalized torsion elements, 
a generalized torsion element of $G(K)$ always belongs to the commutator subgroup, 
so it always vanishes in $\pi_1(K(r))$ for a cyclic surgery slope $r$. 
However, 
the proof of Theorem~\ref{torus_cable_composite}(1) 
shows that for a torus knot $K = T_{p, q}$, 
$g = [x, y] \in [G(K),G(K)]\subset G(K)$ is a generalized torsion element, 
which becomes a generalized torsion element in 
$\pi_1(K(r))$ for all $r \in \mathbb{Q}$ except cyclic surgery slopes.
This raises the following question; 

\begin{question}
\label{g-torsion_remain}
Let $K$ be a non-trivial knot such that $G(K)$ has a generalized torsion element $g$. 
Does $g$ remain a generalized torsion element in $\pi_1(K(r))$ for all $r \in \mathbb{Q}$ except cyclic surgery slopes? 
\end{question}

Note that if $K$ admits a cyclic surgery, 
then $K$ is an L-space knot, 
and hence $G(K)$ is not bi-orderable \cite{CR}. 
So following our conjecture we anticipate such a knot $K$ admits a generalized torsion element.

\medskip

For the second kind of generalized torsion elements, 
both Theorems~\ref{theorem:g_torsion_span_disk} and \ref{theorem:g12bridge} tell us 
that the image of a meridian becomes a generalized torsion element in $\pi_1(K(r))$ 
under suitable assumptions. 

Since the abelianization $H_1(G(K);\mathbb{Z}) = H_1(E(K); \mathbb{Z}) = \mathbb{Z}$ is torsion-free, 
any generalized torsion element of $G(K)$ is homologically trivial. 
Hence a meridian, the generator of $H_1(E(K); \mathbb{Z})$, cannot be a generalized torsion element in $G(K)$. 
It is quite interesting to ask the following.

\begin{question}
\label{ques:g-torsion_meridian}
Let $K$ be a non-trivial knot in $S^3$. 
Then is the image of a meridian $\mu$ a generalized torsion element in $\pi_1(K(r))$ if $r \ne 0, \infty$? 
In particular, does this hold if $|r|$ is sufficiently large? 
\end{question}
 
\begin{question}
\label{ques:g-torsion_null_homology}
Let $K$ be a non-trivial knot in $S^3$. 
Then does there exist a non-trivial element $g \in G(K)$ which satisfies 
\textup{(1)} $g$ is homologically trivial, but not a generalized torsion element in $G(K)$, and 
\textup{(2)} $g$ becomes a generalized torsion element in $\pi_1(K(r))$ for some $r \in \mathbb{Q}$? 
\end{question}

\bigskip

\noindent
\textbf{Acknowledgements}

We would like to thank Stefan Friedl for his valuable comments.
We would also like to thank the referee for careful reading and useful comments.



\begin{thebibliography}{99}
\bibitem{BL}
V. V. Bludov and E. S. Lapshina,
\textit{On ordering groups with a nilpotent commutant} (in Russian),
Sibirsk. Mat. Zh.  \textbf{44} (2003),  no. 3, 513--520;  translation in  Siberian Math. J. \textbf{44}  (2003),  no. 3, 405--410.

\bibitem{BRW} 
S. Boyer, D. Rolfsen and B. Wiest,
\textit{Orderable $3$-manifold groups}, 
Ann.\ Inst.\ Fourier \textbf{55} (2005), 243--288. 

\bibitem{CGW}
I. Chiswell, A. Glass and J. Wilson,
\textit{Residual nilpotence and ordering in one-relator groups
and knot groups}, Math. Proc. Cambridge Philos. Soc. \textbf{158} (2015), no. 2, 275--288.

\bibitem{CDN}
A. Clay, C. Desmarais and P. Naylor,
\textit{Testing bi-orderability of knot groups},
Canad. Math. Bull. \textbf{59} (2016), no. 3, 472--482. 

\bibitem{CR}
A. Clay, and D. Rolfsen,
\textit{Ordered groups, eigenvalues, knots, surgery and L-spaces},
Math. Proc. Camb. Phil. Soc.  \textbf{152} (2012), 115--129.


\bibitem{GW}
M. Greene and B. Wiest,
\textit{A natural framing of knots},
Geom.\ Topol.\ \textbf{2} (1998), 31--64.



\bibitem{Hom}
T. Homma, 
\textit{On Dehn Lemma for three sphere}, 
Yokohama Math.\ J.\ \textbf{5} (1957), 223--244.  

\bibitem{IchiharaMT}
K. Ichihara, K. Motegi and M. Teragaito,
\textit{Vanishing non-trivial elements in a knot group by Dehn fillings}, 
Topology Appl. \textbf{264} (2019), 223--232. 

\bibitem{Ito1}
T. Ito, 
\textit{Framing functions and a strengthened version of Dehn's lemma},  
J. Knot Theory Ramifications \textbf{25} (2016), no. 6, 1650031, 5 pp.

\bibitem{Ito2}
T. Ito, 
\textit{Alexander polynomial obstruction of bi-orderability for rationally homologically fibered knot groups},  
New York J. Math \textbf{23} (2017), 497--503.

\bibitem{IMT-decomposition}
T. Ito, K. Motegi and M. Teragaito,
\textit{Generalized torsion and decomposition of $3$-manifolds},  
Proc. Amer. Math. Soc. \textbf{147} (2019) 4999--5008.



\bibitem{LMR}
P. Longobardi, M. Maj and A. Rhemtulla, 
\textit{On solvable $R^*$--groups},
J. Group Theory \textbf{6}  (2003),  no. 4, 499--503. 


\bibitem{MT2}
K. Motegi and M. Teragaito,
\textit{Generalized torsion elements and bi-orderability of $3$-manifold groups},  
Canad. Math. Bull. \textbf{60} (2017), no. 4, 830--844.


\bibitem{MR0}
R. Mura and A. Rhemtulla,
\textit{Solvable $R^*$--groups}, 
Math. Z.  \textbf{142}  (1975), 293--298. 

\bibitem{MR}
R. Mura and  A. Rhemtulla,
\textit{Orderable groups},
Lecture Notes in Pure and Applied Mathematics, Vol. \textbf{27}.
Marcel Dekker, Inc., New York-Basel, 1977.

\bibitem{NR}
G. Naylor and D. Rolfsen,
\textit{Generalized torsion in knot groups}, 
Canad.\ Math.\ Bull.\ \textbf{59} (2016), 182--189. 


\bibitem{Papa}
D. Papakyriakopoulos, 
\textit{On Dehn's lemma and the sphericity of knots}, 
Ann.\ Math.\ \textbf{66} (1957), 1--26. 



\bibitem{PR}
P. Perron and D. Rolfsen,
\textit{On orderability of fibred knot groups}, Math. 
Proc.\ Cambridge Philos.\ Soc.\  \textbf{135} (2003), 147--153. 

\bibitem{PR2}
P. Perron and D. Rolfsen,
\textit{Invariant ordering of surface groups and $3$--manifolds which fibre over $S^1$}, 
Math.\ Proc.\ Cambridge Philos.\ Soc.\ \textbf{141} (2006), 273--280. 


\bibitem{Te}
M. Teragaito,
\textit{Generalized torsion elements in the knot groups of twist knots},
Proc.\ Amer.\ Math.\ Soc.\ \textbf{144} (2016), no.6, 2677--2682. 


\end{thebibliography}
\end{document}